\newcommand{\calD}{\mathcal{D}}
\newcommand{\calO}{\mathcal{O}}
\newcommand{\calS}{\mathcal{S}}
\newcommand{\mC}{\mathbb{C}}
\newcommand{\mD}{\mathbb{D}}
\newcommand{\mN}{\mathbb{N}}
\newcommand{\mR}{\mathbb{R}}
\newcommand{\mZ}{\mathbb{Z}}
\newcommand{\bba}{\mathbf{a}}
\newcommand{\bbb}{\mathbf{b}}
\newcommand{\bbc}{\mathbf{c}}
\newcommand{\bbd}{\mathbf{d}}
\newcommand{\bbe}{\mathbf{e}}
\newcommand{\bbf}{\mathbf{f}}
\newcommand{\bbg}{\mathbf{g}}
\newcommand{\bbh}{\mathbf{h}}
\newcommand{\bbn}{\mathbf{n}}
\newcommand{\bbp}{\mathbf{p}}
\newcommand{\bbq}{\mathbf{q}}
\newcommand{\bbu}{\mathbf{u}}
\newcommand{\bbv}{\mathbf{v}}
\newcommand{\bbx}{\mathbf{x}}
\newcommand{\bby}{\mathbf{y}}
\newcommand{\bbA}{\mathbf{A}}
\newcommand{\bbB}{\mathbf{B}}
\newcommand{\bbC}{\mathbf{C}}
\newcommand{\bbE}{\mathbf{E}}
\newcommand{\bbI}{\mathbf{I}}
\newcommand{\balpha}{\bm{\alpha}}
\newcommand{\bbeta}{\bm{\beta}}
\newcommand{\bgamma}{\bm{\gamma}}
\newcommand{\bdelta}{\bm{\delta}}
\newtheorem{theorem}{Theorem}[section]
\newtheorem{lemma}[theorem]{Lemma}
\newtheorem{corollary}[theorem]{Corollary}
\newtheorem{proposition}[theorem]{Proposition}
\theoremstyle{definition}
\newtheorem{remark}[theorem]{Remark}
\newtheorem{question}[theorem]{Question}
\newcommand{\nm}{\,\rule[-.6ex]{.13em}{2.3ex}\,}
\theoremstyle{definition}
\theoremstyle{definition}
\theoremstyle{definition}
\newtheorem{example}[theorem]{Example}
\begin{document}

\keywords{ring theoretic properties, periodic distributions}

\subjclass[2010]{Primary 46H99 ; Secondary 13J99}

\title[Algebraic properties of the periodic distribution ring]{A
  potpourri of algebraic properties \\of the ring of periodic
  distributions}

\author[A. Sasane]{Amol Sasane}
\address{Department of Mathematics \\London School of Economics\\
    Houghton Street\\ London WC2A 2AE\\ United Kingdom}
\email{sasane@lse.ac.uk}

\begin{abstract} 
  The set of periodic distributions, with usual addition and
  convolution, forms a ring, which is isomorphic, via taking a Fourier
  series expansion, to the ring $\calS'(\mZ^d)$ of sequences of at
  most polynomial growth with termwise operations.  In this article,
  we establish several algebraic properties of these rings.
\end{abstract}

\maketitle

\section{Introduction}

Purely algebraic properties for rings naturally considered in
Analysis, Algebraic Geometry or Operator Theory, have proven to be of
significant motivational importance behind theory-building in these
areas. For example, the Noetherian property for polynomial rings over
a Noetherian ring is the celebrated Hilbert Basis Theorem, which is a
cornerstone result in Algebraic Geometry.  As a second example,
Serre's 1955 question of whether the ring $k[x_1,\cdots, x_n]$ ($k$ a
field) is a projective-free ring spurred the development of algebraic
$K$-theory. As a third example, we mention the corona problem: given
data $a,b$ in the Hardy algebra $H^\infty(\mD)$ of bounded holomorphic
functions in the unit disk $\mD$ in $\mC$, Kakutani's 1941 question of
whether the pointwise corona condition $|a(z)|+|b(z)|>\delta$
($z\in \mD$) is sufficient for $H^\infty(\mD)$ to be equal to the
ideal $\langle a,b\rangle$ generated by $a,b$, led to huge advances in
Complex Analysis, Function-Theoretic Operator Theory, and Harmonic
Analysis through Carleson's 1962 solution to the problem.  Moreover,
specific algebraic properties possessed by rings arising in various
subdomains in Mathematics can lead to further advances in the theory. For example, Kazhdan's Property (T)
can be established for the special linear group over the ring
$\calO(X)$ holomorphic functions by investigating when the special
linear group over $\calO(X)$ can be generated by elementary matrices.

The theme of this article is to consider a naturally arising such ring
in Harmonic Analysis and Distribution Theory, namely the ring of
periodic distributions, and check which key algebraic properties are
possessed by this ring, and which ones aren't. Via a Fourier series
expansion, the ring $\calD'_{\mathbf{V}}(\mR^d)$ of periodic
distributions (with usual addition and convolution) is isomorphic to
the ring $\calS'(\mZ^d)$ of sequences of at most polynomial growth
with termwise operations, and we recall this below. We will use this
in all of our proofs.

\subsection{The ring $\calD'_{\mathbf{V}}(\mR^d)$ of periodic
  distributions.  \phantom{and the ring aaaa of aaa} \phantom{aaa} The
  ring $\calS'(\mZ^d)$ of Fourier coefficients of elements of
  $\calD_{\mathbf{V}}'(\mR^d)$}
\label{subsec_2}

\noindent For background on periodic distributions and its Fourier
series theory, we refer the reader to the books \cite[Chapter~16]{Dui}
and \cite[p.527-529]{Tre}.

Consider the space $\calS'(\mZ^d)$ of all complex valued maps on
$\mZ^d$ of at most polynomial growth, that is,
$$
\calS'(\mZ^d):=\bigg\{\mathbf{a}: \mZ^d\rightarrow \mC\;\Big|\;
\begin{array}{ll}\exists M>0 \; \exists k\in \mN \;\textrm{ such that} \\
\forall \bbn\in \mZ^d, \;
|\mathbf{a}(\bbn)|\leq M (1+\nm \bbn\nm)^k
\end{array}\bigg\},
$$
where $\nm \bbn\nm:=|n_1|+\cdots+|n_d|$ for all
$\bbn=(n_1,\cdots, n_d)\in \mZ^d$. Then $\calS'(\mZ^d)$ is a unital
commutative ring with pointwise operations, and the multiplicative
unit element given by the constant function $ \bbn\mapsto 1$, for all
$\bbn\in \mZ^d$.  The set $\calS'(\mZ^d)$ equipped with pointwise
operations, is a commutative, unital ring.  Moreover,
$(\calS'(\mZ^d),+,\cdot)$ is isomorphic as a ring, to the ring
$(\calD'_{\mathbf{V}}(\mR^d), +, \ast)$, where
$\calD'_{\mathbf{V}}(\mR^d)$ is the set of all periodic distributions
(see the definition below), with the usual pointwise addition of
distributions, and multiplication taken as convolution of
distributions.

For ${\mathbf{v}}\in {{\mathbb{R}}}^{d}$, the 
{\em translation operator} ${\mathbf{S_\bbv}}:\calD'(\mR^d)\rightarrow \calD'(\mR^d)$,  is defined by 
$$
\langle
{\mathbf{S_\bbv}}(T),\varphi\rangle = \langle
T,\varphi(\cdot+{\mathbf{v}})\rangle\textrm{ for all }\varphi \in
{\mathcal{D}}({\mathbb{R}}^d).
$$ 
A distribution $T\in {\mathcal{D}}'({\mathbb{R}}^d)$ is called {\em
  periodic with a period}
$\mathbf{v}\in {\mathbb{R}}^d\setminus \{\mathbf{0}\}$ if
$$
T= {\mathbf{S}_\bbv}(T).
$$
Let $ {\mathbf{V}}:=\{{\mathbf{v_1}}, \cdots, {\mathbf{v_d}}\} $ be a
linearly independent set of $d$ vectors in ${\mathbb{R}}^d$.  We
define ${\mathcal{D}}'_{{\mathbf{V}}}({\mathbb{R}}^d)$ to be the set
of all distributions $T$ that satisfy
 $$
 {\mathbf{S_{v_k}}}(T)=T, \quad
k=1,\cdots, d.  
$$ 
From \cite[\S34]{Don}, $T$ is a tempered distribution, and from the
above it follows by taking Fourier transforms that
$ (1-e^{2\pi i {\mathbf{v_k}} \cdot \mathbf{y}})\widehat{T}=0$, for
$ k=1,\cdots,d.  $ It can be seen that
$$
\widehat{T}
=
\sum_{\mathbf{v} \in V^{-1} {\mathbb{Z}}^d} 
\alpha_{\mathbf{v}}(T) \delta_{\mathbf{v}},
$$
for some scalars $\alpha_{\mathbf{v}}(T)\in {\mathbb{C}}$, and where
$V$ is the matrix with its rows equal to the transposes of the column
vectors ${\mathbf{v_1}}, \cdots, {\mathbf{v_d}}$:
$$
V:= \left[ \begin{array}{ccc} 
    \mathbf{v_1}^{\top} \\ \vdots \\ \mathbf{v_d}^{\top} 
    \end{array}\right].
$$
Also, in the above, $\delta_{\mathbf{v}}$ denotes the usual Dirac
measure with support in $\mathbf{v}$: 
$$
\langle \delta_{\mathbf{v}},
\varphi\rangle =\varphi (\mathbf{v}),\quad \varphi \in
{\mathcal{D}}({\mathbb{R}}^d).  
$$ 
Then the Fourier coefficients $\alpha_{ \bbv}(T)$ give rise to an
element in $\calS'(\mZ^d)$, and vice versa, every element in
$\calS'(\mZ^d)$ is the set of Fourier coefficients of some periodic
distribution. In this manner, the ring
$ (\calD'_{\mathbf{V}}(\mR^d),+,\ast) $ of periodic distributions on
$\mR^d$ is isomorphic (as a ring) to $ (\calS'(\mZ^d),+,\cdot).  $

The outline of this article is as follows: in the subsequent sections,
we will show that the ring $\calS'(\mZ^d)$ (and hence also the
isomorphic ring $\calD_{\mathbf{V}}(\mR^d)$) has the following
algebraic properties:
\begin{enumerate}
 \item $\calS'(\mZ^d)$ is not Noetherian. 
 \item $\calS'(\mZ^d)$ is a B\'ezout ring. 
 \item $\calS'(\mZ^d)$ is coherent. 
 \item $\calS'(\mZ^d)$ is a Hermite ring. 
 \item $\calS'(\mZ^d)$ is not projective-free. 
 \item $\calS'(\mZ^d)$ is a pre-B\'ezout ring. 
 \item For all $m\in \mN$, $SL_m(\calS'(\mZ^d))$ is generated by
   elementary matrices, that is,
   $SL_m(\calS'(\mZ^d))=E_m(\calS'(\mZ^d))$.
 \item A generalized ``corona-type pointwise condition'' on the matricial data
   $(A,b)$ with entries from $\calS'(\mZ^d)$ for the solvability of
   $Ax=b$ with $x$ also having entries from $\calS'(\mZ^d)$.
\end{enumerate}
In each section, we will first give the background of the algebraic
property, by recalling key definitions/characterizations, and then
prove the property, possibly with additional commentary.

\section{Noetherian property}

Recall that a commutative ring is called {\em Noetherian} if every
ascending chain of ideals is stationary, that is, given any chain of
ideals in the ring:
 $$
 I_1 \subset I_2\subset I_3\subset \cdots,
 $$
 there exists an $K\in \mN$ such that $I_K=I_{K+1}=\cdots$. 
  
\begin{proposition}
\label{prop_not_noetherian}
 $\calS'(\mZ^d)$ is not Noetherian.
\end{proposition}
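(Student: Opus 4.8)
The plan is to exhibit an explicit, strictly increasing chain of ideals in $\calS'(\mZ^d)$ that never becomes stationary. Fix an enumeration $\bbn_1,\bbn_2,\bbn_3,\dots$ of the countable set $\mZ^d$, and for each $j\in\mN$ let $e_j\in\calS'(\mZ^d)$ be the indicator sequence of the singleton $\{\bbn_j\}$, that is, $e_j(\bbn_j)=1$ and $e_j(\bbn)=0$ for all $\bbn\neq\bbn_j$. Each $e_j$ is bounded, hence a fortiori of at most polynomial growth, so indeed $e_j\in\calS'(\mZ^d)$.

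Next I would set $I_k:=\langle e_1,\dots,e_k\rangle$, the ideal generated by the first $k$ of these elements, and identify it concretely: since $\bba\cdot e_j$ is the sequence taking the value $\bba(\bbn_j)$ at $\bbn_j$ and $0$ elsewhere, a general element $\sum_{j=1}^k \bba_j e_j$ of $I_k$ ranges exactly over the sequences in $\calS'(\mZ^d)$ supported in $\{\bbn_1,\dots,\bbn_k\}$ (conversely, any such finitely supported sequence is visibly of this form, and lies in $\calS'(\mZ^d)$ automatically). In particular $e_{k+1}$, being supported at $\bbn_{k+1}\notin\{\bbn_1,\dots,\bbn_k\}$, lies in $I_{k+1}$ but not in $I_k$. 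Hence
$$
I_1\subsetneq I_2\subsetneq I_3\subsetneq\cdots
$$
is a strictly ascending chain of ideals, which shows that $\calS'(\mZ^d)$ is not Noetherian.

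An equivalent way to package the argument, which I could give instead, is to note that the ideal of all finitely supported sequences in $\calS'(\mZ^d)$ is not finitely generated: any finite generating set $f_1,\dots,f_m$ has $\bigcup_{i=1}^m \{\bbn : f_i(\bbn)\neq 0\}$ finite, which would force every element of the ideal to be supported inside that finite set, contradicting the existence of finitely supported sequences of arbitrarily large support. Either way the proof is entirely elementary; the only points needing (minimal) care are the verification that the $e_j$ genuinely belong to $\calS'(\mZ^d)$ and the pointwise identification of $I_k$ with the sequences supported on a prescribed finite set, both immediate from the definition of the ring operations, so I do not anticipate any real obstacle.
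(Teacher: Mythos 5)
Your proof is correct and is essentially the paper's argument: the paper also builds a non-stationary ascending chain of ideals of finitely supported sequences (there, $I_k$ is the set of sequences vanishing for $\nm\bbn\nm>k$, with strictness witnessed by the indicator of $(k,0,\dots,0)$), and your enumeration-based chain, as well as your alternative remark that the ideal of all finitely supported sequences is not finitely generated, are only cosmetic repackagings of the same idea. No issues.
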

\begin{proof} 
  For $k\in \mN$, set
  $ I_k=\{\bba \in \calS'(\mZ^d): \bba(\bbn)=0 \textrm{ for all }\nm
  \bbn\nm >k\}.  $
  Then $I_k$ is clearly an ideal in $\calS'(\mZ^d)$. Also, by
  considering the sequence
$$
\bbe_k:= \bigg(\mZ^d\owns \bbn  \mapsto \left\{ \begin{array}{ll} 1 &\textrm{if }\bbn=(k,0,\cdots, 0),\\
                                                  0
                                                                    &\textrm{otherwise},
                             \end{array}\right\} \bigg)\in \calS'(\mZ^d),
$$
for $k\in \mN$, we see that $\bbe_k\in I_k\setminus I_{k-1}$. So we
have the strict inclusions
$$
I_1\subsetneq I_2 \subsetneq I_3\subsetneq\cdots,
$$
showing the existence of an infinite ascending non-stationary chain of
ideals. Hence $\calS'(\mZ^d)$ is not Noetherian.
\end{proof}

\begin{remark}
We remark that in the same manner, one can also show that 
$$
\ell^\infty(\mZ^d):=\bigg\{\mathbf{a}: \mZ^d\rightarrow \mC\;\Big|\;
\begin{array}{ll}\exists M>0 \;\;\textrm{ such that} \\
\forall \bbn\in \mZ^d, \;
|\mathbf{a}(\bbn)|\leq M 
\end{array}\bigg\},
$$
the ring of all bounded sequences with pointwise operations, is not
Noetherian either.
 \end{remark} 

\section{B\'ezout ring} 

A commutative ring is called {\em B\'ezout} if every finitely
generated ideal is principal.

\begin{theorem}
 \label{prop_Bezout}
 Every finitely generated ideal in $\calS'(\mZ^d)$ is principal, that
 is, $\calS'(\mZ^d)$ is B\'ezout ring.
\end{theorem}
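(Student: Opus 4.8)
The plan is to reduce first to the case of two generators. Indeed, if one shows that for every $\bba,\bbb\in\calS'(\mZ^d)$ the ideal $\langle\bba,\bbb\rangle$ is principal, then a straightforward induction on the number of generators gives that every finitely generated ideal $\langle\bba_1,\dots,\bba_m\rangle$ is principal: writing $\langle\bbg\rangle=\langle\bba_2,\dots,\bba_m\rangle$ by the inductive hypothesis, we get $\langle\bba_1,\dots,\bba_m\rangle=\langle\bba_1,\bbg\rangle$, which is principal by the two-generator case. So fix $\bba,\bbb\in\calS'(\mZ^d)$ and let $S:=\{\bbn\in\mZ^d : \bba(\bbn)\neq 0\textrm{ or }\bbb(\bbn)\neq 0\}$ be their joint support.

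The candidate generator I would use is \emph{not} the naive ``indicator of $S$'': that sequence cannot obviously be written as $\bba\bbx+\bbb\bby$, since solving $\bba(\bbn)\bbx(\bbn)+\bbb(\bbn)\bby(\bbn)=1$ on $S$ forces $\bbx,\bby$ to blow up wherever $\bba$ and $\bbb$ are simultaneously very small, possibly faster than polynomially; the point is that one cannot divide freely in $\calS'(\mZ^d)$, because reciprocals of polynomially bounded sequences need not be polynomially bounded. Instead I would split $S=S_a\sqcup S_b$ with $S_a:=\{\bbn\in S : |\bba(\bbn)|\geq|\bbb(\bbn)|\}$ and $S_b:=S\setminus S_a$, and define
$$
\bbg(\bbn) := \left\{\begin{array}{ll} \bba(\bbn) & \textrm{if }\bbn\in S_a,\\ \bbb(\bbn) & \textrm{if }\bbn\in S_b,\\ 0 & \textrm{otherwise.}\end{array}\right.
$$
Note $\bba(\bbn)\neq 0$ for $\bbn\in S_a$ and $\bbb(\bbn)\neq 0$ for $\bbn\in S_b$, and $|\bbg(\bbn)|\leq\max\{|\bba(\bbn)|,|\bbb(\bbn)|\}$ for all $\bbn$, so $\bbg\in\calS'(\mZ^d)$.

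It then remains to verify the two inclusions giving $\langle\bba,\bbb\rangle=\langle\bbg\rangle$. For $\bbg\in\langle\bba,\bbb\rangle$: with $\chi_{S_a},\chi_{S_b}$ the characteristic functions of $S_a,S_b$ (bounded, hence in $\calS'(\mZ^d)$), one has $\bbg=\bba\,\chi_{S_a}+\bbb\,\chi_{S_b}$. For the reverse inclusion it suffices to show $\bba,\bbb\in\langle\bbg\rangle$. Here I would set $\bbu(\bbn):=1$ on $S_a$, $\bbu(\bbn):=\bba(\bbn)/\bbb(\bbn)$ on $S_b$, and $\bbu(\bbn):=0$ off $S$; since $|\bba(\bbn)/\bbb(\bbn)|<1$ on $S_b$, $\bbu$ is bounded by $1$, so $\bbu\in\calS'(\mZ^d)$, and by construction $\bba=\bbg\,\bbu$. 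Symmetrically, $\bbv$ given by $\bbv(\bbn):=\bbb(\bbn)/\bba(\bbn)$ on $S_a$ (bounded by $1$ there), $\bbv(\bbn):=1$ on $S_b$, $\bbv(\bbn):=0$ off $S$, lies in $\calS'(\mZ^d)$ and satisfies $\bbb=\bbg\,\bbv$.

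The only genuine obstacle is the observation in the second paragraph that forces the non-obvious choice of $\bbg$ — namely picking, at each $\bbn$, the pointwise larger of $\bba(\bbn)$ and $\bbb(\bbn)$ — so that all the quotients appearing in $\bbu$ and $\bbv$ are automatically bounded by $1$. Once this is in place, every step is a routine membership check in $\calS'(\mZ^d)$ via the trivial bounds $|{\cdot}|\leq\max\{|\bba|,|\bbb|\}$ or $|{\cdot}|\leq 1$.
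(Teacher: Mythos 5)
Your proof is correct and follows essentially the same strategy as the paper's: the paper takes $|\bba|+|\bbb|$ as the generator (so that the multipliers $\bba/(|\bba|+|\bbb|)$ and $\bbb/(|\bba|+|\bbb|)$ are bounded by $1$), while you take the pointwise larger-in-modulus of the two entries, and in both cases the crux is that every multiplier witnessing the two inclusions is bounded and hence lies in $\calS'(\mZ^d)$. The only extra content is your explicit (and standard) induction reducing to two generators, which the paper dispatches in one line.
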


\noindent Before we give the proof of the above result, we collect some useful
observations first. For a complex sequence
$\bba=(\mZ^d\owns \bbn \mapsto \bba(\bbn))$, let
$$
|\bba|(\bbn):=|\bba(\bbn)|,\quad \bbn\in \mZ^d. 
$$ 
Then we can write $\bba=|\bba|\cdot
\bbu_{\bba}$, where 
$$
\bbu_{\bba}( \bbn)=\left\{\begin{array}{cl}
            \displaystyle \frac{\bba(\bbn)}{|\bba(\bbn)|} & \textrm{if } \bba(\bbn)\neq 0,\\
            1 & \textrm{if } \bba(\bbn)=0. \phantom{\displaystyle a^f}
          \end{array}\right. 
$$
Then $\bbu_\bba\in \calS'(\mZ^d)$. Also, $\bba\in \calS'(\mZ^d)$ if and only if
$|\bba|\in \calS'(\mZ^d)$. 
For a complex sequence $\bba=(\mZ^d\owns \bbn \mapsto \bba(\bbn))$, let 
$$
(\bba^\ast)(\bbn)=\bba(\bbn)^\ast,\quad \bbn\in \mZ^d,  
$$ 
where $\bba(\bbn)^\ast$ on the right hand side denotes the complex conjugate of the complex number $\bba(\bbn)$. 
Then $ \bba\in \calS'(\mZ^d)$ if and only if
$\bba^* \in \calS'(\mZ^d)$. Also,
$ \bbu_\bba \bbu_{\bba^*}={\mathbf{1}}$ (the constant sequence, taking
value $1$ everywhere on $\mZ^d$) and $|\bba|=\bba (\bbu_\bba)^*$.
 
\begin{proof}
  It is enough to show that an ideal $\langle \bba,\bbb\rangle$
  generated by $\bba,\bbb\in \calS'(\mZ^d)$ is principal.  We'll show
  that $\langle\bba,\bbb\rangle =\langle |\bba|+|\bbb|\rangle.  $
 
  Since $(\bbu_{\bba})^*, (\bbu_{\bbb})^*\in \calS'(\mZ^d)$, we have
  $|\bba|+|\bbb|=\bba (\bbu_{\bba})^*+\bbb (\bbu_{\bbb})^*\in \langle
  \bba,\bbb\rangle.  $
  Thus $\langle |\bba|+|\bbb|\rangle\subset \langle\bba,\bbb\rangle $.
 
\smallskip 
 
\noindent Define $\balpha$ by
$$
\alpha(\bbn) =\left\{\begin{array}{cl}
    \displaystyle \frac{\bba(\bbn)}{|\bba(\bbn)|+|\bbb(\bbn)|} & 
    \textrm{if } |\bba(\bbn)|+|\bbb(\bbn)|\neq 0,\\
    1 & \textrm{if } |\bba(\bbn)|+|\bbb(\bbn)|=0, 
    \phantom{\displaystyle a^F}
           \end{array}\right. 
$$
for all $\bbn\in \mZ^d$. Then $|\balpha(\bbn)|\leq 1$ for all $\bbn$, and so
$\balpha \in \calS'(\mZ^d)$. Moreover, $\bba=\balpha \cdot (|\bba|+|\bbb|)$,
and so $\bba\in \langle |\bba|+|\bbb|\rangle$.  Similarly, $\bbb\in
\langle |\bba|+|\bbb|\rangle$ too. Hence $\langle \bba,\bbb\rangle
\subset \langle |\bba|+|\bbb|\rangle$.

 Consequently, $\langle\bba,\bbb\rangle =\langle
|\bba|+|\bbb|\rangle$. This completes the proof.
\end{proof}

\section{Coherence}
\label{subsec_1}

A commutative unital ring $R$ is called {\em coherent} if every 
finitely generated ideal $I$ is finitely presentable, that is, 
there exists an  exact sequence 
$$
0\longrightarrow K \longrightarrow F\longrightarrow I \longrightarrow 0,
$$
where $F$ is a finitely generated free $R$-module and $K$ is a
finitely generated $R$-module.

We refer the reader to the monograph \cite{Gla} for
background on coherent rings and for the relevance of the property of
coherence in homological algebra. All Noetherian rings are coherent,
but not all coherent rings are Noetherian.  For example, the
polynomial ring $\mC[x_1,x_2,x_3,\cdots]$ is not Noetherian (because
the sequence of ideals $ \langle x_1 \rangle\subset \langle x_1, x_2
\rangle\subset \langle x_1, x_2, x_3 \rangle \subset \cdots $ is
ascending and not stationary), but $\mC[x_1, x_2, x_3,\cdots]$ is
coherent \cite[Corollary~2.3.4]{Gla}. Some equivalent
characterizations of coherent rings are listed below:
\begin{enumerate}
\item \cite{Cha}; \cite[Theorem~2.0A, p.404]{Fai}: Let $R$ be a unital
  commutative ring. Let $n\in \mN:=\{1,2,3,\cdots\}$ and
  $F=(f_1,\cdots, f_n)\in R^n$. A {\em relation} $G$ on $F$, written
  $G\in F^\perp$, is an $n$-tuple $G=(g_1,\cdots, g_n)\in R^n$ such
  that $g_1 f_1+\cdots + g_n f_n=0.$ The ring $R$ is coherent if and
  only if for each $n\in \mN$ and each $F\in R^n$, the $R$-module
  $F^\perp$ is finitely generated.
\item \cite[Definition, p.41, p.44]{Gla}: 
Let $R$ be a commutative unital ring. An $R$-module $M$ is called a
{\em coherent $R$-module} if it is finitely generated and every
finitely generated $R$-submodule $N$ of $M$ is finitely presented,
that is, there exists an exact sequence 
$$
F_1 \longrightarrow F_0 \longrightarrow N \longrightarrow 0
$$
with $F_1, F_2$ both finitely generated, free $R$-modules. Recall that
an $R$-module is a {\em free $R$-module} if it is isomorphic to a
direct sum of copies of $R$. A commutative unital ring $R$ is coherent
if and only if $R$ is a coherent $R$-module.
\end{enumerate}

Although it is known that B\'ezout {\em domains} are automatically
coherent, we can't use this fact and Theorem~\ref{prop_Bezout}, since
$\calS'(\mZ^d)$ is {\em not} a domain: there exist nontrivial zero
divisors in $\calS'(\mZ^d)$.  For $\bba\in \calS'(\mZ^d)$, let
$Z(\bba)$ denote the zero set of $\bba$, that is,
$$
Z(\bba):=\{\bbn\in \mZ^d:\bba(\bbn)=0\}.
$$
Let $\mathbf{0}\in \calS'(\mZ^d)$ denote the constant map $\mZ^d\owns \bbn\mapsto 0$.

\begin{theorem}
\label{main_theorem}
 $\calS'(\mZ^d)$ is a coherent ring.
\end{theorem}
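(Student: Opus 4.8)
The plan is to use the first characterization of coherence listed above: for every $n \in \mN$ and every $F = (\bbf_1, \dots, \bbf_n) \in (\calS'(\mZ^d))^n$, the module of relations $F^\perp$ is finitely generated. So fix $F$, and let $\bbg = |\bbf_1| + \cdots + |\bbf_n|$. By the argument used in the proof of Theorem~\ref{prop_Bezout}, each $\bbf_j$ can be written as $\bbf_j = \balpha_j \cdot \bbg$ with $\balpha_j \in \calS'(\mZ^d)$ satisfying $|\balpha_j(\bbn)| \le 1$ everywhere; moreover $\bbg = \sum_j \bbf_j (\bbu_{\bbf_j})^*$, so $\langle \bbf_1, \dots, \bbf_n\rangle = \langle \bbg \rangle$. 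Note $Z(\bbg) = \bigcap_j Z(\bbf_j)$, and off $Z(\bbg)$ the relation $\sum_j g_j \bbf_j = 0$ is equivalent to $\sum_j g_j \balpha_j = 0$ (dividing by $\bbg(\bbn) \ne 0$), since where $\bbg(\bbn) \ne 0$ we have $\bbf_j(\bbn) = \balpha_j(\bbn)\bbg(\bbn)$.

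The key structural observation is that $Z(\bbg)$ splits every relation cleanly. Given $G = (g_1, \dots, g_n) \in F^\perp$, write $G = G' + G''$ where $g_j'(\bbn) = g_j(\bbn)$ for $\bbn \notin Z(\bbg)$ and $g_j'(\bbn) = 0$ for $\bbn \in Z(\bbg)$, and $G'' = G - G'$. Then $G'' $ is supported on $Z(\bbg)$, where all $\bbf_j$ vanish, so $G''$ is an \emph{arbitrary} element of the module $M_0$ of $n$-tuples supported on $Z(\bbg)$; this is exactly the kernel of multiplication by $\chi$, the indicator sequence of $\mZ^d \setminus Z(\bbg)$ (which lies in $\calS'(\mZ^d)$ since it is bounded). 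The point is that $M_0 = \{G \in (\calS'(\mZ^d))^n : \chi \cdot g_j = 0 \text{ for all } j\}$ is finitely generated: it equals $(1-\chi)(\calS'(\mZ^d))^n$, generated by the $n$ tuples $(1-\chi)\bbe^{(j)}$ where $\bbe^{(j)}$ is the standard basis $n$-tuple. For the remaining part, each $G'$ is a relation on $F$ supported off $Z(\bbg)$; there, as noted, $G'$ being a relation means $\sum_j g_j' \balpha_j = 0$. But the $n$-tuples
$$
R_{jk} := \chi \cdot (\alpha_k \bbe^{(j)} - \alpha_j \bbe^{(k)}), \qquad 1 \le j < k \le n,
$$
lie in $F^\perp$ (check: $\alpha_k \bbf_j - \alpha_j \bbf_k = \alpha_k \alpha_j \bbg - \alpha_j \alpha_k \bbg = \mathbf{0}$, and multiplying by $\chi$ keeps it a relation), and I claim they generate all such $G'$. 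Indeed, fix $\bbn \notin Z(\bbg)$; then $(\balpha_1(\bbn), \dots, \balpha_n(\bbn)) \ne \mathbf{0}$ and one checks by elementary linear algebra over $\mC$ that any vector orthogonal to it — in the sense $\sum_j g_j'(\bbn)\balpha_j(\bbn) = 0$ — is a $\mC$-combination of the vectors $\alpha_k(\bbn) \bbe^{(j)} - \alpha_j(\bbn)\bbe^{(k)}$; the combination coefficients can be written down explicitly (e.g.\ using a component $\balpha_{j_0}(\bbn)$ of maximal modulus as a pivot, which has modulus $\ge 1/n$ times $\sum|\balpha_i(\bbn)|$, hence bounded below), and these coefficients, as functions of $\bbn$, stay bounded and hence define elements of $\calS'(\mZ^d)$. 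Assembling: $F^\perp = M_0 + \sum_{j<k} \calS'(\mZ^d) R_{jk}$, a finitely generated module, so $\calS'(\mZ^d)$ is coherent.

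The step I expect to be the main obstacle is producing the explicit bounded coefficient sequences expressing a general relation $G'$ in terms of the $R_{jk}$, \emph{uniformly enough} that they land back in $\calS'(\mZ^d)$: one must ensure that choosing a pivot index of maximal modulus at each $\bbn$ does not destroy polynomial growth. The safeguard is that the pivot modulus is bounded below by $\frac1n(|\balpha_1(\bbn)| + \cdots + |\balpha_n(\bbn)|) \ge \frac1n$ on the set where not all $\balpha_i(\bbn)$ vanish — wait, that bound needs care since the $\balpha_i$ need not sum to something bounded below; the correct statement is that $\sum_i |\balpha_i(\bbn)|^2 \le n$ while $\sum_i |\bbf_i(\bbn)| \ne 0$ forces $\sum_i|\balpha_i(\bbn)| \ge$ a positive quantity only where $\bbg(\bbn)\ne 0$, and there is no uniform lower bound, so instead I would express $G'$ using $\balpha_j^* = \overline{\balpha_j}$: the identity $g_j' = g_j' \cdot \sum_i \balpha_i \balpha_i^* \cdot (\text{something})$ fails too, so the cleanest route is the pivoting one but with the coefficients multiplied through by $\chi$ and by the bounded quantity $\balpha_{j_0}(\bbn)^* / (\sum_i |\balpha_i(\bbn)|^2)$ only where the denominator exceeds a threshold, handling the low-denominator region separately — these bookkeeping details are routine once the decomposition $F^\perp = M_0 + \sum R_{jk}$ is in place, and that decomposition is the heart of the argument.
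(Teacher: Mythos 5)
Your proof is correct, but it takes a genuinely different (and longer) route than the paper. The paper exploits the B\'ezout property (Theorem~\ref{prop_Bezout}) to reduce a finitely generated ideal to a principal one $\langle \bba\rangle$, and then simply identifies the kernel of multiplication by $\bba$ as the principal ideal $\langle \mathbf{1}_{Z(\bba)}\rangle$, which gives the finite presentation $0\to K\to F\to I\to 0$ directly from the definition quoted at the start of the section. You instead verify Chase's characterization head-on: for an arbitrary $n$-tuple $F$ you exhibit an explicit finite generating set for the syzygy module $F^\perp$, namely the $n$ generators $(1-\chi)\bbe^{(j)}$ of the part supported on the common zero set plus the $\binom{n}{2}$ Koszul-type relations $R_{jk}$. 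What your approach buys is a self-contained verification of the relation-module characterization (with an explicit bound of $n+\binom{n}{2}$ generators) that does not pass through the equivalence of the two definitions of coherence; what it costs is the pivoting argument, which the paper's route avoids entirely. Note also that the obstacle you flag in your last paragraph dissolves: off $Z(\bbg)$ one has $\balpha_j(\bbn)=\bbf_j(\bbn)/\bbg(\bbn)$ with $\bbg=|\bbf_1|+\cdots+|\bbf_n|$, hence
$$
\sum_{j=1}^n|\balpha_j(\bbn)|=\frac{|\bbf_1(\bbn)|+\cdots+|\bbf_n(\bbn)|}{\bbg(\bbn)}=1
\quad\textrm{for all }\bbn\notin Z(\bbg),
$$
so the maximal-modulus pivot satisfies $|\balpha_{j_0(\bbn)}(\bbn)|\geq 1/n$ uniformly, the coefficients $g_j'(\bbn)/\balpha_{j_0(\bbn)}(\bbn)$ are bounded by $n\,|g_j'(\bbn)|$ and therefore lie in $\calS'(\mZ^d)$, and no separate treatment of a ``low-denominator region'' is needed. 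With that observation inserted, your argument closes cleanly.
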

\begin{proof} Let $I$ be a finitely generated ideal in $\calS'(\mZ^d)$. 
 Then $I$ is principal, and so there exists an 
$\bba\in \calS'(\mZ^d)$ such that $I=\langle
\bba\rangle$. Let $K=\langle {\mathbf{1}}_{Z(\bba)}\rangle$, where
${\mathbf{1}}_{Z(\bba)}$
is the indicator function of the zero set of $\bba$, that is, for all $\bbn\in
\mZ^d$,
$$
\big({\mathbf{1}}_{Z(\bba)}\big)(\bbn)
:=
\left\{\begin{array}{cl}
            0 & \textrm{if } \bba(\bbn)\neq 0,\\
            1 & \textrm{if } \bba(\bbn)=0. 
           \end{array}\right. 
$$
Then ${\mathbf{1}}_{Z(\bba)}\in \calS'(\mZ^d)$.  Moreover, let
$\varphi: \calS'(\mZ^d)\rightarrow I$ be the ring homomorphism given by
$\varphi(\bbb)=\bba \bbb$, for $\bbb\in \calS'(\mZ^d)$.  Finally let
$F:=\calS'(\mZ^d)=\langle \mathbf{1} \rangle$. Then we will check that the
sequence
$$
\begin{array}{ccccccccc}
 0 & \longrightarrow & K   & \longrightarrow  & F &  \stackrel{\varphi}{\longrightarrow} & I & \longrightarrow &0 
\\
   &                 & \rotatebox[origin=c]{90}{=} & & \rotatebox[origin=c]{90}{=} && \rotatebox[origin=c]{90}{=}& & \\
   
 && \langle {\mathbf{1}}_{Z(\bba)}\rangle 
&& \calS'(\mZ^d) && \langle \bba \rangle &&
\end{array}
$$
is exact. The exactness at $K$ and $I$ is clear. So we only need to
show that
$$
(\ker \varphi:=)\; \{\bbb\in \calS'(\mZ^d): \bba \bbb=\mathbf{0}\}=
\langle {\mathbf{1}}_{Z(\bba)}\rangle .
$$
Since $ {\mathbf{1}}_{Z(\bba)}\in \ker \varphi$, it is clear that
$\langle {\mathbf{1}}_{Z(\bba)}\rangle\subset \ker \varphi$. It
remains to show the reverse inclusion. Suppose that $\bbb\in \ker \varphi$. 
Then $\bba(\bbn) \bbb(\bbn)=0$ for all $\bbn\in \mZ^d$. Now if
$\bba(\bbn)\neq 0$, then $\bbb(\bbn)=0$. Hence
$$
\bbb= {\mathbf{1}}_{Z(\bba)} \cdot \bbb \in 
\langle {\mathbf{1}}_{Z(\bba)}\rangle.
$$
So $\ker \varphi \subset \langle {\mathbf{1}}_{Z(\bba)}\rangle$ as well.  
\end{proof}

\smallskip 

\noindent {\bf Remark on the coherence of $\ell^\infty(\mZ^d)$:}
 The above proof of Theorem~\ref{main_theorem} carries over,
mutatis mutandis, to the ring $\ell^\infty(\mZ)$.  Thus we obtain the
result:

\begin{theorem} 
\label{thm_ell_infty}
 $\ell^\infty(\mZ^d)$ is a coherent ring. 
\end{theorem}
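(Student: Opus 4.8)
The plan is to transfer, in order, first the argument of Theorem~\ref{prop_Bezout} and then the argument of Theorem~\ref{main_theorem} to the ring $\ell^\infty(\mZ^d)$, the only genuinely new point being the observation that every auxiliary sequence constructed in those two proofs is bounded and hence stays inside $\ell^\infty(\mZ^d)$. First I would record the elementary closure properties of $\ell^\infty(\mZ^d)$ that replace the ones used for $\calS'(\mZ^d)$: if $\bba\in\ell^\infty(\mZ^d)$ then $|\bba|\in\ell^\infty(\mZ^d)$, $\bba^\ast\in\ell^\infty(\mZ^d)$, and the unimodular companion $\bbu_\bba$ (with $|\bbu_\bba(\bbn)|=1$ for every $\bbn$) lies in $\ell^\infty(\mZ^d)$, while still $\bba=|\bba|\,\bbu_\bba$, $\bbu_\bba\bbu_{\bba^\ast}=\mathbf 1$ and $|\bba|=\bba\,(\bbu_\bba)^\ast$. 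All of these are immediate, since each sequence involved is bounded by $\max(\|\bba\|_\infty,1)$.

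Second, I would prove the $\ell^\infty$-analogue of Theorem~\ref{prop_Bezout}, namely that $\ell^\infty(\mZ^d)$ is a B\'ezout ring; this step is needed because Theorem~\ref{prop_Bezout} as stated concerns only $\calS'(\mZ^d)$. Given $\bba,\bbb\in\ell^\infty(\mZ^d)$, put $\bbc:=|\bba|+|\bbb|\in\ell^\infty(\mZ^d)$ and claim $\langle\bba,\bbb\rangle=\langle\bbc\rangle$. The inclusion $\langle\bbc\rangle\subset\langle\bba,\bbb\rangle$ follows from $\bbc=\bba\,(\bbu_\bba)^\ast+\bbb\,(\bbu_\bbb)^\ast$. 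For the reverse, define $\balpha$ by $\alpha(\bbn)=\bba(\bbn)/\bbc(\bbn)$ when $\bbc(\bbn)\neq 0$ and $\alpha(\bbn)=1$ otherwise; then $|\balpha(\bbn)|\leq 1$ for all $\bbn$, so $\balpha\in\ell^\infty(\mZ^d)$, whence $\bba=\balpha\,\bbc\in\langle\bbc\rangle$, and symmetrically $\bbb\in\langle\bbc\rangle$. Thus every two-generated ideal is principal, and the standard induction on the number of generators shows every finitely generated ideal is principal.

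Third, I would replay the proof of Theorem~\ref{main_theorem}. Let $I$ be a finitely generated ideal of $\ell^\infty(\mZ^d)$; by the previous step $I=\langle\bba\rangle$ for some $\bba\in\ell^\infty(\mZ^d)$. Note that $\mathbf 1_{Z(\bba)}\in\ell^\infty(\mZ^d)$ because it takes only the values $0$ and $1$. Set $F:=\ell^\infty(\mZ^d)$, $K:=\langle\mathbf 1_{Z(\bba)}\rangle$, and let $\varphi:F\to I$ be given by $\varphi(\bbb)=\bba\,\bbb$. Surjectivity of $\varphi$ gives exactness at $I$, exactness at $K$ is the injectivity of the inclusion, and the only thing to verify is $\ker\varphi=K$: the inclusion $K\subset\ker\varphi$ is clear since $\bba\,\mathbf 1_{Z(\bba)}=\mathbf 0$, and conversely if $\bba\,\bbb=\mathbf 0$ then $\bbb(\bbn)=0$ whenever $\bba(\bbn)\neq 0$, so $\bbb=\mathbf 1_{Z(\bba)}\,\bbb\in K$. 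Hence $0\to K\to F\to I\to 0$ is exact with $F$ a free $\ell^\infty(\mZ^d)$-module and $K$ finitely (indeed singly) generated, so $I$ is finitely presentable and $\ell^\infty(\mZ^d)$ is coherent.

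I do not anticipate a real obstacle: the substance of the argument is exactly that boundedness survives all the operations used — taking moduli, forming unimodular companions, the quotients defining $\balpha$, and passing to $\{0,1\}$-valued indicators — so both earlier proofs transfer verbatim. The one point not to overlook is that the coherence argument of Theorem~\ref{main_theorem} tacitly invokes the B\'ezout property, so that property must be checked separately for $\ell^\infty(\mZ^d)$, even though, as above, the check is routine.
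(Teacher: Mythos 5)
Your proposal is correct and is essentially the paper's own proof: the paper simply asserts that the argument of Theorem~\ref{main_theorem} carries over mutatis mutandis to $\ell^\infty(\mZ^d)$, and you have supplied exactly the routine verifications (boundedness of $|\bba|$, $\bbu_\bba$, $\balpha$, and $\mathbf 1_{Z(\bba)}$, plus the separate check of the B\'ezout property) that this transfer requires. The paper also records a second, independent proof via Neville's topological characterization of coherence of $C(X;\mC)$ and the extremal disconnectedness of $\beta\mZ^d$, but that is presented as an alternative, not as the primary argument.
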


\noindent This {\em also} follows from a classical result of
Neville~\cite{Nev}, which gives a topological characterization of
coherence for the ring $C(X;\mR)$ of all real-valued continuous functions
on $X$.
  
\begin{proposition}[Neville] 
\label{prop_neville_real}$\;$

\noindent 
$C(X;\mR)$ is coherent if and only if $X$ is basically disconnected. 
\end{proposition}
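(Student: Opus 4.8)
Proof proposal.

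The plan is to establish both implications through the standard dictionary between the ideal theory of $C(X;\mR)$ and the cozero-sets of $X$; throughout, $X$ is assumed Tychonoff, as is customary for rings of continuous functions (and as holds in the application above, where $X=\beta\mZ^d$). The facts I would use are: for $f\in C(X;\mR)$ one has $fg=0$ exactly when $g$ vanishes on $\overline{\textrm{coz}(f)}$, so that $\textrm{Ann}(f)=\{g\in C(X;\mR):g\equiv 0\textrm{ on }\overline{\textrm{coz}(f)}\}$; and, by definition, $X$ is basically disconnected precisely when $\overline{\textrm{coz}(f)}$ is clopen for every $f\in C(X;\mR)$.

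For the direction ``$X$ basically disconnected $\Rightarrow$ $C(X;\mR)$ coherent'' I would combine two ingredients. First, a basically disconnected space is an $F$-space, so by the Gillman--Henriksen characterisation of $F$-spaces every finitely generated ideal of $C(X;\mR)$ is principal. Second, when $\overline{\textrm{coz}(f)}$ is clopen the indicator $e_f:=\mathbf{1}_{X\setminus\overline{\textrm{coz}(f)}}$ is continuous and idempotent and $\textrm{Ann}(f)=\langle e_f\rangle$ (any $g$ with $gf=0$ vanishes on $\overline{\textrm{coz}(f)}$, hence $g=g\,e_f$). Given a finitely generated ideal $I$, write $I=\langle a\rangle$; then $0\to\textrm{Ann}(a)\to C(X;\mR)\xrightarrow{\cdot a}I\to 0$ is exact with free middle term of rank one and finitely generated kernel $\langle e_a\rangle$, so $I$ is finitely presented and $C(X;\mR)$ is coherent. (One could bypass the $F$-space input by proving the B\'ezout property directly in the basically disconnected case, at comparable cost.)

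For the converse, suppose $C(X;\mR)$ is coherent. By the relation-module criterion recalled above (item~(1) of Section~\ref{subsec_1}, applied with $n=1$), $\textrm{Ann}(f)$ is finitely generated for each $f$; write $\textrm{Ann}(f)=\langle g_1,\dots,g_n\rangle$ and put $g:=g_1^2+\dots+g_n^2\ge 0$. Using the Tychonoff property — a point outside $\overline{\textrm{coz}(f)}$ carries a function in $\textrm{Ann}(f)$ that does not vanish there — one checks $Z(g)=\overline{\textrm{coz}(f)}$, whence $\textrm{coz}(g)=X\setminus\overline{\textrm{coz}(f)}$. Assume for contradiction that $\overline{\textrm{coz}(f)}$ is not open and fix a boundary point $x_0\in Z(g)\cap\overline{\textrm{coz}(g)}$. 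The function $h:=g^{1/4}$ lies in $C(X;\mR)$ and vanishes exactly on $Z(g)=\overline{\textrm{coz}(f)}$, so $hf=0$ and $h\in\textrm{Ann}(f)$; write $h=\sum_{i=1}^n b_ig_i$ with $b_i\in C(X;\mR)$. On a neighbourhood $N$ of $x_0$ on which $|b_i|\le C$ for all $i$, the estimate $|g_i|\le g^{1/2}$ gives $g^{1/4}=\big|\sum_i b_ig_i\big|\le nC\,g^{1/2}$ on $N$, i.e. $g\ge (nC)^{-4}$ at every point of $N$ where $g>0$; but $x_0\in\overline{\textrm{coz}(g)}$ produces points of $N$ with $g>0$ and $g$ arbitrarily small, a contradiction. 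Hence $\overline{\textrm{coz}(f)}$ is open for every $f$, i.e. $X$ is basically disconnected.

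The main obstacle is the construction in the converse: one must produce an element of the finitely generated ideal $\textrm{Ann}(f)$ whose order of vanishing near a boundary point $x_0$ is too slow for it to be a $C(X;\mR)$-combination of any fixed finite generating set, and convert this into a genuine inequality. Comparing $h=g^{1/4}$ with $g_1,\dots,g_n$, all of which are $O(g^{1/2})$ near $x_0$, is the decisive point; the remaining items — the identification of $\textrm{Ann}(f)$ with functions vanishing on $\overline{\textrm{coz}(f)}$, the Tychonoff separation steps, and the equivalence ``basically disconnected $\Leftrightarrow$ all cozero-closures clopen'' — are routine, and on the forward direction the only substantial external input is the Gillman--Henriksen theorem.
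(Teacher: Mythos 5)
Your proof is correct, and it is worth noting at the outset that the paper itself does not prove this proposition: it is quoted as a classical result with a pointer to \cite{Nev}, so your argument is a genuine reconstruction rather than a variant of an internal proof. The converse direction is the substantive part and is sound: coherence applied to the single-element tuple $(f)$ gives generators $g_1,\dots,g_n$ of $\textrm{Ann}(f)$; the identification $Z(g)=\overline{\textrm{coz}(f)}$ for $g=\sum g_i^2$ uses complete regularity correctly; and the decisive comparison — $g^{1/4}\in\textrm{Ann}(f)$ must be a bounded combination of the $g_i$, each of which is $O(g^{1/2})$, forcing $g$ to be bounded below on a punctured neighbourhood of a boundary point of $\overline{\textrm{coz}(f)}$ — delivers a genuine contradiction with $x_0\in Z(g)\cap\overline{\textrm{coz}(g)}$; this is essentially Neville's own mechanism. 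The forward direction is also correct but imports the Gillman--Henriksen theorem (basically disconnected $\Rightarrow$ $F$-space $\Rightarrow$ every finitely generated ideal of $C(X;\mR)$ is principal), an external input of comparable weight to the statement being proved; granting it, your presentation $0\to\langle e_a\rangle\to C(X;\mR)\to\langle a\rangle\to 0$ is exactly the presentation the paper uses for $\calS'(\mZ^d)$ in Theorem~\ref{main_theorem}, with clopenness of $\overline{\textrm{coz}(a)}$ supplying the continuity of the indicator function that is automatic in the discrete setting. The standing hypothesis that $X$ is Tychonoff, which you flag, is needed in both directions and holds in the paper's application $X=\beta\mZ^d$.
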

  
\noindent A topological space $X$ is called 
{\em basically disconnected} if for each $f\in C(X;\mR)$, the {\em cozero set of $f$}, 
 $
\textrm{coz}(f):=\{x\in X:f(x)\neq 0\},
$ 
has an open closure. 

We will need the complex-valued version of the above result, which can be obtained from the 
 following observation.
  
\begin{lemma}
\label{lemma_complex_vs_real}
$C(X;\mC)$ is coherent if and only if $C(X;\mR)$ is coherent.
\end{lemma}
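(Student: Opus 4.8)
The plan is to use the concrete description $C(X;\mC)=C(X;\mR)\oplus i\,C(X;\mR)$: writing $R:=C(X;\mR)$ and $S:=C(X;\mC)$, the ring $S$ contains $R$ and is a free $R$-module of rank $2$ with basis $\{1,i\}$, so every $f\in S$ has a unique decomposition $f=u+iv$ with $u,v\in R$, and the real and imaginary part maps $S\to R$ are $R$-linear. I will then deduce both implications from the relation-theoretic characterization of coherence recalled above (see \cite{Cha} and \cite[Theorem~2.0A, p.404]{Fai}): a unital commutative ring $T$ is coherent if and only if for every $n\in\mN$ and every $F\in T^n$ the module $F^\perp$ of relations on $F$ is a finitely generated $T$-module.

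First I would show that coherence of $R$ implies coherence of $S$. Given $F=(f_1,\dots,f_n)\in S^n$, write $f_j=a_j+ib_j$ with $a_j,b_j\in R$. For $G=(g_1,\dots,g_n)\in S^n$ with $g_j=c_j+id_j$ ($c_j,d_j\in R$), separating $\sum_j g_jf_j=0$ into real and imaginary parts turns it into the pair of $R$-linear equations $\sum_j(a_jc_j-b_jd_j)=0$ and $\sum_j(b_jc_j+a_jd_j)=0$ in the $2n$ unknowns $c_1,\dots,c_n,d_1,\dots,d_n$. Under the $R$-module identification $S^n\cong R^{2n}$ sending $G$ to $(c_1,\dots,c_n,d_1,\dots,d_n)$, the submodule $F^\perp$ of $S^n$ thus corresponds to $E_1^\perp\cap E_2^\perp\subset R^{2n}$, where $E_1,E_2\in R^{2n}$ are the coefficient vectors of the two equations. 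Since $R$ is coherent, $R^{2n}$ is a coherent $R$-module, each $E_i^\perp$ is finitely generated over $R$, and hence so is their intersection \cite{Gla}; therefore $F^\perp$ is a finitely generated $R$-module and, because $R\subset S$, a finitely generated $S$-module. This yields coherence of $S$.

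For the converse I would argue similarly, now reading relations over $S$ back into $R$. Let $F=(f_1,\dots,f_n)\in R^n\subset S^n$. By coherence of $S$ the $S$-module of $S$-relations on $F$ is generated by finitely many $G^{(1)},\dots,G^{(k)}\in S^n$; write $G^{(l)}=P^{(l)}+iQ^{(l)}$ with $P^{(l)},Q^{(l)}\in R^n$. Because the $f_j$ are real, the real and imaginary parts of $\sum_j G^{(l)}_jf_j=0$ vanish separately, so each $P^{(l)}$ and each $Q^{(l)}$ is an $R$-relation on $F$. Conversely, any $R$-relation $H\in R^n$ on $F$ is in particular an $S$-relation, so $H=\sum_{l=1}^k s_lG^{(l)}$ for suitable $s_l\in S$; writing $s_l=\alpha_l+i\beta_l$ with $\alpha_l,\beta_l\in R$ and taking real parts componentwise (legitimate since $H\in R^n$) gives $H=\sum_{l=1}^k(\alpha_lP^{(l)}-\beta_lQ^{(l)})$. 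Hence the $R$-module of relations on $F$ is generated by the $2k$ elements $P^{(1)},\dots,P^{(k)},Q^{(1)},\dots,Q^{(k)}$, so it is finitely generated; as $F$ and $n$ were arbitrary, $R$ is coherent.

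There is no substantial obstacle: the argument is careful bookkeeping between the $R$-module and $S$-module structures on submodules of $R^{2n}\cong S^n$, the only external input being the standard fact that a finite intersection of finitely generated submodules of a coherent module is finitely generated \cite{Gla} (used only in the first implication, and avoidable by realizing $F^\perp$ as the kernel of the map $R^{2n}\to R^2$ whose matrix consists of the coefficient vectors). In fact the same proof shows that coherence passes in both directions along any ring extension $R\hookrightarrow S$ for which $S$ is a finite free $R$-module, and we only use the rank-$2$ case $i^2=-1$ here.
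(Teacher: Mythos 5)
Your proof is correct and follows essentially the same route as the paper's: both directions decompose elements into real and imaginary parts and transfer the relation module between $S^n$ and $R^{2n}$, with your $E_1^\perp\cap E_2^\perp$ being exactly the kernel of the paper's $2\times 2n$ matrix $[\Phi]$ acting on $R^{2n}$ (as you note yourself), and the ``only if'' direction coinciding with the paper's argument almost verbatim.
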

  
\noindent Here $C(X;\mC)$ denotes the ring of all complex valued
continuous functions on $X$.  We will use  \cite[Corollary~2.2.2 and 2.2.3, p.43]{Gla}, 
 quoted below. 
  
\begin{proposition} 
\label{prop_gla_1}$\;$

\noindent 
If {\em (1)} $R$ is a commutative unital ring,

\noindent \phantom{If }{\em (2)} $M,N$ coherent $R$-modules, and

\noindent \phantom{If }{\em (3)} 
$\varphi:M\rightarrow N$ a homomorphism, 

\noindent then $\ker \varphi$ is a
coherent $R$-module.
\end{proposition}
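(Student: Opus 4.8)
The plan is to reduce everything to the single claim that $\ker\varphi$ is finitely generated; granting that, coherence of $\ker\varphi$ follows almost formally. First I would record the easy half. Write $\varphi(M)\subseteq N$ for the image. Since $M$ is coherent it is in particular finitely generated, say by $m_1,\dots,m_k$, so $\varphi(M)$ is generated by $\varphi(m_1),\dots,\varphi(m_k)$ and is thus a finitely generated submodule of the coherent module $N$; by the definition of coherence, $\varphi(M)$ is therefore finitely presented. We then have a short exact sequence
$$
0\longrightarrow \ker\varphi \longrightarrow M \stackrel{\varphi}{\longrightarrow} \varphi(M)\longrightarrow 0
$$
whose middle term is finitely generated and whose right-hand term is finitely presented.

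The crux is the standard homological fact that in such a short exact sequence the left-hand term is automatically finitely generated, and this is really the only nonroutine point. I would argue it as follows. Since $M$ is finitely generated, choose a surjection $\rho\colon R^n\to M$; then $\varphi\rho\colon R^n\to\varphi(M)$ is a surjection onto a finitely presented module. Fix a finite presentation $R^p\stackrel{\psi}{\longrightarrow}R^q\stackrel{\pi}{\longrightarrow}\varphi(M)\longrightarrow 0$, so that $\ker\pi=\psi(R^p)$ is finitely generated. Applying Schanuel's lemma to the two exact sequences $0\to\ker(\varphi\rho)\to R^n\to\varphi(M)\to 0$ and $0\to\ker\pi\to R^q\to\varphi(M)\to 0$ yields an isomorphism
$$
\ker(\varphi\rho)\oplus R^q\;\cong\;\ker\pi\oplus R^n.
$$
The right-hand side is finitely generated, hence so is the left-hand side, and therefore so is its direct summand $\ker(\varphi\rho)$. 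Finally, since $\rho$ is surjective it carries $\ker(\varphi\rho)=\rho^{-1}(\ker\varphi)$ onto $\ker\varphi$, so $\ker\varphi$ is a homomorphic image of a finitely generated module and is thus finitely generated.

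It then remains only to verify that every finitely generated submodule $N'$ of $\ker\varphi$ is finitely presented; but such an $N'$ is also a finitely generated submodule of $M$, and $M$ is coherent, so $N'$ is finitely presented by definition. Hence $\ker\varphi$ is finitely generated and all of its finitely generated submodules are finitely presented, that is, $\ker\varphi$ is a coherent $R$-module, as asserted. (In the paper this statement is simply quoted from \cite{Gla}; the sketch above indicates the short self-contained proof, whose one genuine ingredient is the invariance-of-presentation lemma furnished by Schanuel's lemma.)
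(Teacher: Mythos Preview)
Your argument is correct. The paper does not supply its own proof of this proposition at all; it is stated and attributed to \cite[Corollary~2.2.2]{Gla} and then used as a black box in the proof of Lemma~\ref{lemma_complex_vs_real}. You have correctly identified this and filled in a self-contained proof.

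The route you take---reducing to the finite generation of $\ker\varphi$, obtaining that via Schanuel's lemma applied to two projective resolutions of the finitely presented image $\varphi(M)$, and then reading off coherence of $\ker\varphi$ from coherence of $M$---is the standard one and is essentially what one finds in Glaz. The key observation, which you isolate correctly, is that in a short exact sequence $0\to K\to M\to P\to 0$ with $M$ finitely generated and $P$ finitely presented, the kernel $K$ is automatically finitely generated; Schanuel's lemma is exactly the right tool for this. All the steps check out: the application of Schanuel is valid since $R^n$ and $R^q$ are free, a direct summand of a finitely generated module is finitely generated, and $\rho$ indeed maps $\ker(\varphi\rho)$ onto $\ker\varphi$.
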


\begin{proposition}
\label{prop_gla_2}$\;$

\noindent 
Every finite direct sum of coherent modules is a coherent module. 
\end{proposition}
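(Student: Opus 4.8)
The plan is to reduce, by induction on the number of summands, to the case of two summands and then to verify the defining property of a coherent module directly. Since $M_1\oplus\cdots\oplus M_n$ is the direct sum of the two modules $M_1\oplus\cdots\oplus M_{n-1}$ — coherent by the inductive hypothesis — and $M_n$ (the case $n=1$ being trivial), it suffices to prove: if $M_1$ and $M_2$ are coherent $R$-modules, then $M:=M_1\oplus M_2$ is coherent. Now $M$ is finitely generated, being generated by the union of finite generating sets of $M_1$ and of $M_2$, so what remains is to show that every finitely generated submodule $N$ of $M$ is finitely presented. Along the way I would use three routine facts: (a) a homomorphic image of a finitely generated module is finitely generated; (b) a finitely generated submodule of a coherent module is finitely presented (immediate from the definition of coherent module); and (c) a standard consequence of Schanuel's Lemma (cf.\ \cite{Gla}): if $0\rightarrow A\rightarrow R^{k}\rightarrow P\rightarrow 0$ is exact with $P$ finitely presented, then $A$ is finitely generated — that is, when a finitely presented module is written as a quotient of a finitely generated free module, the kernel is finitely generated.

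For the core step (two coherent summands $M_1,M_2$), I would fix a finitely generated submodule $N=\langle x_1,\dots,x_n\rangle$ of $M:=M_1\oplus M_2$ together with the surjection $\varphi:R^{n}\rightarrow N\subseteq M$, $\varphi(e_i)=x_i$; it then suffices to prove that $\ker\varphi$ is finitely generated. Writing $\pi_1,\pi_2$ for the projections of $M$ and $\varphi_j:=\pi_j\circ\varphi:R^{n}\rightarrow M_j$, one has $\ker\varphi=\ker\varphi_1\cap\ker\varphi_2$, since the inclusion $N\hookrightarrow M$ is injective. First I would apply coherence of $M_1$: the image $\varphi_1(R^{n})$ is a finitely generated submodule of $M_1$, hence finitely presented by (b), so (c) applied to $0\rightarrow\ker\varphi_1\rightarrow R^{n}\rightarrow\varphi_1(R^{n})\rightarrow 0$ gives that $\ker\varphi_1$ is finitely generated, say $\ker\varphi_1=\langle v_1,\dots,v_m\rangle$. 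Next, setting $\theta:R^{m}\rightarrow M_2$, $\theta(e_\ell)=\varphi_2(v_\ell)$, and $g:R^{m}\rightarrow R^{n}$, $g(e_\ell)=v_\ell$ (so that $g(R^{m})=\ker\varphi_1$ and $\varphi_2\circ g=\theta$), I would apply coherence of $M_2$: $\theta(R^{m})=\varphi_2(\ker\varphi_1)$ is a finitely generated submodule of $M_2$, hence finitely presented, so (c) again gives that $\ker\theta$ is finitely generated. The final step is the bookkeeping identity $g(\ker\theta)=\ker\varphi$: for $w\in R^{m}$, $g(w)$ always lies in $\ker\varphi_1$, and $g(w)\in\ker\varphi_2$ if and only if $\theta(w)=\varphi_2(g(w))=0$, i.e.\ if and only if $w\in\ker\theta$; hence $g(\ker\theta)=\ker\varphi_1\cap\ker\varphi_2=\ker\varphi$. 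Then $\ker\varphi$, being by (a) the image under $g$ of the finitely generated module $\ker\theta$, is finitely generated, so $N$ is finitely presented and $M$ is coherent, which closes the induction.

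The whole argument is a short two-stage diagram chase, so there is no deep obstacle; the one ingredient that is not essentially a tautology is fact (c) — the stability of finite presentation along a short exact sequence whose middle term is finitely generated free — which is precisely where one pays for not working in a Noetherian setting. The single point demanding a moment's thought is the identity $\ker\varphi=g(\ker\theta)$: it is what transports the finite generation of $\ker\theta\subseteq R^{m}$ back down to $\ker\varphi\subseteq R^{n}$ without assuming that the intermediate module $\ker\varphi_1\subseteq R^{n}$ is itself coherent — indeed $R^{n}$ need not be coherent, as $R$ is not assumed coherent — and its verification is the one-line computation above. I note that Proposition~\ref{prop_gla_1} is not used on this route; alternatively one could deduce the statement from the ``two-out-of-three'' principle for coherence (if $0\rightarrow A\rightarrow B\rightarrow C\rightarrow 0$ is exact with $A,C$ coherent, then $B$ is coherent) applied to the split sequence $0\rightarrow M_1\rightarrow M_1\oplus M_2\rightarrow M_2\rightarrow 0$, but that principle is not recorded in the material above and is proved by an argument of the same flavour.
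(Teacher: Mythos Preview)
Your argument is correct. The reduction to two summands, the identification $\ker\varphi=\ker\varphi_1\cap\ker\varphi_2$, the two applications of the Schanuel-type fact (c) to extract finite generation of $\ker\varphi_1$ and then of $\ker\theta$, and the transport identity $g(\ker\theta)=\ker\varphi$ are all sound; the last equality in particular is verified in both directions, since any $u\in\ker\varphi_1\cap\ker\varphi_2$ lies in $g(R^m)=\ker\varphi_1$ and any preimage $w$ with $g(w)=u$ satisfies $\theta(w)=\varphi_2(u)=0$.

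However, there is nothing to compare against: the paper does not supply its own proof of Proposition~\ref{prop_gla_2}. It is simply quoted, together with Proposition~\ref{prop_gla_1}, from Glaz \cite[Corollary~2.2.2 and 2.2.3, p.43]{Gla}, and then invoked as a black box in the proof of Lemma~\ref{lemma_complex_vs_real}. Your write-up is thus a self-contained replacement for that citation. It is in fact close in spirit to the standard textbook argument (and to the alternative you mention via the two-out-of-three property for coherent modules in a short exact sequence, which is how Glaz organises it), so nothing is lost and a dependency on an external reference is removed.
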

  
\begin{proof}(of Lemma~\ref{lemma_complex_vs_real}): 
  
\noindent 
(``If'' part). Suppose that $C(X;\mR)$ is a coherent ring. Let $n\in \mN$. 

\noindent Let 
 $
\bbf_1=\bba_1+i\bbb_1,\; \cdots,\; \bbf_n=\bba+i\bbb_n\in C(X;\mC),
$ 
where each $\bba_j,\bbb_j\in C(X;\mR)$.  

\noindent Set $R:=C(X;\mR)$, 
$M:=C(X;\mR)^{(2n)\times  1}$,  and $N:=C(X;\mR)^{2\times (2n)}$. 

\noindent Suppose that $\varphi:M\rightarrow N$ is the
module homomorphism given by multiplication by the matrix
$$
[\Phi]:=\left[\begin{array}{cr|c|cr}
                 \bba_1 & -\bbb_1 & \cdots & \bba_n & -\bbb_n\\
                 \bbb_1 & \bba_1 & \cdots & \bbb_n & \bba_n 
                \end{array}\right].
$$
By Proposition~\ref{prop_gla_2}, $M,N$ are coherent $C(X;\mR)$-modules,
since $C(X;\mR)$ is a coherent ring.  Next, by
proposition~\ref{prop_gla_1}, $\ker \varphi$ is a coherent
$C(X;\mR)$-module, and in particular, it is finitely generated, say by
$$
\left[\begin{array}{c} \bbc_1^{(k)} \\\bbd_1^{(k)} \\ \vdots \\ \bbc_n^{(k)} \\ \bbd_n^{(k)}
\end{array}\right],\quad k=1,\cdots, m.
$$
Let $\bbg_1=\balpha_1+i\bbeta_1,\cdots, \bbg_n=\balpha_n+i\bbeta_n$ (where each $\balpha_j,\bbeta_j\in C(X;\mR)$) 
be such that 
$$
\bbf_1 \bbg_1+\cdots +\bbf_n \bbg_n=\mathbf{0}.
$$
Then 
$$
[\Phi]\left[ \begin{array}{c} \balpha_1 \\\bbeta_1\\ \vdots \\\balpha_n \\\bbeta_n\end{array}\right]=
\mathbf{0},
$$
and so there exist $\bgamma_1,\cdots, \bgamma_m$ such that 
$$
\left[ \begin{array}{c} \balpha_1 \\\bbeta_1\\ \vdots \\\balpha_n \\\bbeta_n\end{array}\right]
=
\bgamma_1 
\left[\begin{array}{c} \bbc_1^{(1)} \\ \bbd_1^{(1)} \\ \vdots \\ \bbc_n^{(1)} \\ \bbd_n^{(1)}\end{array}\right]
+ \cdots +
\bgamma_m 
\left[\begin{array}{c} \bbc_1^{(m)} \\ \bbd_1^{(m)} \\ \vdots \\ \bbc_n^{(m)} \\ \bbd_n^{(m)}\end{array}\right].
$$
But then 
$$
 \left[\begin{array}{c}
        \bbg_1\\ \vdots\\\bbg_n
       \end{array}\right]
       =
       \bgamma^{(1)} \left[ \begin{array}{c} \bbc_1^{(1)}+ i \bbd_1^{(1)}\\\vdots \\ 
                             \bbc_n^{(1)}+ i \bbd_n^{(1)}
                            \end{array}\right]+\cdots+ 
                          \bgamma^{(m)} \left[ \begin{array}{c} \bbc_1^{(m)}+ i \bbd_1^{(m)}\\\vdots \\
                              \bbc_n^{(m)}+ i \bbd_n^{(m)}
                            \end{array}\right]         .
$$
Hence we see that $(\bbf_1,\cdots,\bbf_n)^\perp$ is contained in the $C(X;\mC)$-module generated by 
$$
\left[ \begin{array}{c} \bbc_1^{(1)}+ i \bbd_1^{(1)}\\\vdots \\ 
                             \bbc_n^{(1)}+ i \bbd_n^{(1)}
                            \end{array}\right],\cdots, 
           \left[ \begin{array}{c} \bbc_1^{(m)}+ i \bbd_1^{(m)}\\\vdots \\ 
                             \bbc_n^{(m)}+ i \bbd_n^{(m)}
                            \end{array}\right]   .
$$
It is also clear that each of the above columns belongs to
$(\bbf_1,\cdots,\bbf_n)^\perp$. Hence $(\bbf_1,\cdots,\bbf_n)^\perp$
also contains the $C(X;\mC)$-module generated by the above columns.
Consequently, $C(X;\mC)$ is a coherent ring.
       
\smallskip 

\noindent (``Only if'' part). Now suppose that $C(X;\mC)$ is a
coherent ring. Let $n\in \mN$ and
$$
\bbA:=(\bba_1,\cdots, \bba_n)\in C(X;\mR)^{1\times n}.
$$
Suppose that 
$$
\left[\begin{array}{c} \bbc_1^{(k)}+i\bbd_1^{(k)}\\\vdots\\ \bbc_n^{(k)}+i\bbd_n^{(k)}\end{array}\right], 
\quad k=1,\cdots, m,
$$
generate the $C(X;\mC)$-module $\bbA^\perp$, where each $\bbc^{(k)}_j,
\bbd^{(k)}_j\in C(X;\mR)$. Consider a $\bbB=(\bbb_1,\cdots,\bbb_n)\in
C(X;\mR)^{1\times n}$ such that
$$
\bba_1\bbb_1+\cdots +\bba_n\bbb_n=\mathbf{0}.
$$
Then there exist $\bbp^{(k)}, \bbq^{(k)}\in C(X;\mR)$, $k=1,\cdots ,m$
such that
$$
\left[ \!\begin{array}{c} \bbb_1\\\vdots \\\bbb_n\end{array}\!\right]
=
(\bbp^{(1)}+i\bbq^{(1)})
\left[\!\!\begin{array}{c} \bbc_1^{(1)}+i\bbd_1^{(1)}\\\vdots\\ \bbc_n^{(1)}+i\bbd_n^{(1)}\end{array}\!\!\right]
+ \cdots +
 (\bbp^{(m)}+i\bbq^{(m)})
\!\left[\!\begin{array}{c} \bbc_1^{(m)}+i\bbd_1^{(m)}\\\vdots\\ \bbc_n^{(m)}+i\bbd_n^{(m)}\end{array}\!\right]\!.
$$
Equating real parts, we obtain in particular that 
$$
\left[\!\begin{array}{c} \bbb_1\\\vdots \\\bbb_n\end{array}\!\right]\!
=
\bbp^{(1)}
\left[\!\begin{array}{c} \bbc_1^{(1)} \\ \vdots \\ \bbc_n^{(1)}\end{array}\!\right]
-\bbq^{(1)}
\left[\!\begin{array}{c} \bbd_1^{(1)}\\\vdots\\ \bbd_n^{(1)}\end{array}\!\right]
+ \cdots +
\bbp^{(m)}
\left[\!\begin{array}{c} \bbc_1^{(m)} \\\vdots\\ \bbc_n^{(m)}\end{array}\!\right]
-\bbq^{(m)}
\left[\!\begin{array}{c} \bbd_1^{(m)}\\\vdots\\ \bbd_n^{(m)}\end{array}\!\right]\!.
$$
Thus the $C(X;\mR)$-module $\bbA^\perp$ is contained in the $C(X;\mR)$-module
generated by the $2m$ vectors
$$
\left[\!\begin{array}{c} \bbc_1^{(1)} \\ \vdots \\ \bbc_n^{(1)}\end{array}\!\right], 
\left[\!\begin{array}{c} \bbd_1^{(1)}\\\vdots\\ \bbd_n^{(1)}\end{array}\!\right], 
 \cdots,
\left[\!\begin{array}{c} \bbc_1^{(m)} \\\vdots\\ \bbc_n^{(m)}\end{array}\!\right],
\left[\!\begin{array}{c} \bbd_1^{(m)}\\\vdots\\ \bbd_n^{(m)}\end{array}\!\right]\!.
$$
On the other hand each of these vectors also lie in the $C(X;\mR)$-module $\bbA^\perp$, which can be seen 
immediately by equating the real and imaginary parts in 
$$
\bba_1(\bbc_1^{(k)}+i\bbd_1^{(k)})+\cdots+\bba_n (\bbc_n^{(k)}+i\bbd_n^{(k)})=\mathbf{0},\quad k=1,\cdots,m.
$$
Hence the $C(X;\mR)$-module $\bbA^\perp$ is finitely
generated. Consequently, $C(X;\mR)$ is coherent too.
\end{proof}

\noindent In light of Neville's result,
Proposition~\ref{prop_neville_real}, the above gives:

\begin{corollary}
\label{corollary_neville_complex}$\;$

\noindent 
$C(X;\mC)$ is coherent if and only if $X$ is basically disconnected.
\end{corollary}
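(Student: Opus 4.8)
The plan here is essentially bookkeeping: the statement is a formal consequence of the two equivalences already in hand. First I would invoke Lemma~\ref{lemma_complex_vs_real}, which gives that $C(X;\mC)$ is a coherent ring if and only if $C(X;\mR)$ is a coherent ring. Then I would invoke Neville's characterization, Proposition~\ref{prop_neville_real}, which says that $C(X;\mR)$ is coherent if and only if $X$ is basically disconnected. Concatenating the two biconditionals yields that $C(X;\mC)$ is coherent if and only if $X$ is basically disconnected, which is exactly the claim.

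There is no real obstacle at this stage; all the substantive work lives in Lemma~\ref{lemma_complex_vs_real}. For the ``if'' direction of that lemma one realizes a relation $\bbf_1\bbg_1+\cdots+\bbf_n\bbg_n=\mathbf{0}$ over $C(X;\mC)$, after writing $\bbf_j=\bba_j+i\bbb_j$ and splitting into real and imaginary parts, as a relation over $C(X;\mR)$ on the $2n$ components governed by the $2\times(2n)$ block matrix $[\Phi]$ built from the $2\times 2$ blocks encoding multiplication by $\bbf_j$; coherence of $C(X;\mR)$ makes the relation module of $[\Phi]$ finitely generated, and recombining the generators into complex vectors produces a finite generating set for $(\bbf_1,\dots,\bbf_n)^\perp$ over $C(X;\mC)$. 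For the ``only if'' direction one runs the argument in reverse: a finite complex generating set for $\bbA^\perp$, with $\bbA$ real, yields upon taking real and imaginary parts a finite real generating set for the real relation module. Both directions are already carried out in the excerpt.

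If one preferred a statement of the corollary that does not merely cite the lemma, one could splice the relation-module criterion (characterization~(1) in the list of equivalences for coherence) directly into the chain, but this adds nothing beyond repeating the proof of Lemma~\ref{lemma_complex_vs_real}. I would therefore keep the proof to a single sentence: apply Lemma~\ref{lemma_complex_vs_real} and then Proposition~\ref{prop_neville_real}.
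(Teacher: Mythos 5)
Your proposal is correct and matches the paper exactly: the corollary is obtained by chaining Lemma~\ref{lemma_complex_vs_real} with Proposition~\ref{prop_neville_real}, which is all the paper does. The additional summary of the lemma's proof is accurate but not needed for the corollary itself.
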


\noindent If $X$ is a topological space, then let $C_b(X;\mC)$ denote
the algebra of bounded continuous complex valued functions on $X$,
endowed with pointwise operations and the supremum norm:
$$
\|\bbf\|_\infty:=\sup_{x\in X}|\bbf(x)|,\quad \bbf\in C_b(X;\mC).
$$
Then $C_b(X;\mC)$ is a $C^*$-algebra, and its maximal ideal space is
$\beta X$, the Stone-\v{C}ech compactification of $X$. 

Let $\mZ^d$ be endowed with the usual Euclidean topology inherited
from $\mR^d$. Then the $C^*$-algebra
$\ell^\infty(\mZ^d)=C_b(\mZ^d;\mC)$ is isomorphic to
$C(\beta \mZ^d;\mC)$.  But the Stone-\v{C}ech compactification
$\beta\mZ^d$ of the discrete space $\mZ^d$ is extremally disconnected
(that is, the closure of every open set in it is open), see for
example \cite[\S6.3, p.450]{PorWoo}, and in particular, also basically
disconnected. Using Corollary~\ref{corollary_neville_complex},
Theorem~\ref{thm_ell_infty} follows:
$\ell^\infty(\mZ^d)=C_b(\mZ^d;\mC)=C(\beta\mZ^d;\mC)$ is a coherent
ring. This completes the alternative proof of the coherence of
$\ell^\infty(\mZ^d)$.

\bigskip 

\noindent {\bf Remark on the coherence of $c(\mZ^d)$:} 
Let $c(\mZ^d)$ be the subring of $\ell^\infty(\mZ^d)$ consisting of
all convergent complex sequences, that is, 
$$
c(\mZ^d)=
\bigg\{\mathbf{a}\in \ell^\infty(\mZ^d)\;\Big|
\begin{array}{ll}\exists L\in \mC \textrm{ such that }
\forall \epsilon>0 \; \exists N\in \mN \textrm{ such that}\\ \forall \bbn\in \mZ^d \textrm{ such that }\nm \bbn\nm >N,\;
|\mathbf{a}(\bbn)-L|< \epsilon
\end{array}\bigg\}.
$$
The $C^*$-algebra $c(\mZ^d)$ is isomorphic to $C(\alpha \mZ^d;\mC)$,
where $\alpha \mZ^d$ denotes the Alexandroff one-point
compactification of $\mZ^d$ (where $\mZ^d$ has the usual Euclidean
topology on $\mZ^d$ inherited from $\mR^d$). So in light of
Corollary~\ref{corollary_neville_complex}, the question of coherence
of $c(\mZ^d)$ boils down to investigating whether or not
$\alpha \mZ^d$ is basically disconnected.
  
\begin{theorem}$\;$

\noindent {\em (1)} 
$\alpha \mZ^d$ is not basically disconnected. 

\noindent {\em(2)} $c(\mZ^d)$ is not a coherent ring.
\end{theorem}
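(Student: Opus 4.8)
The plan is to prove (1) by an explicit construction, and then get (2) for free. Indeed, (2) follows immediately from (1): since $c(\mZ^d)$ is isomorphic as a ring to $C(\alpha\mZ^d;\mC)$ and $\alpha\mZ^d$ is compact Hausdorff, Corollary~\ref{corollary_neville_complex} says $C(\alpha\mZ^d;\mC)$ is coherent if and only if $\alpha\mZ^d$ is basically disconnected; by (1) it is not, hence $c(\mZ^d)$ is not coherent. So the whole content is in (1), where we must exhibit one $f\in C(\alpha\mZ^d;\mR)$ whose cozero set fails to have open closure.

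The construction I would use: since $\mZ^d$ is a countably infinite discrete space, I would fix a partition $\mZ^d=A\sqcup B$ into two infinite sets, enumerate $A=\{a_1,a_2,a_3,\dots\}$, and define $f:\alpha\mZ^d\to\mR$ by $f(a_k)=1/k$ for $k\in\mN$, and $f\equiv 0$ on $B\cup\{\infty\}$. The first step is to check that $f$ is continuous: at each point of $\mZ^d$ this is automatic since $\mZ^d$ is discrete, and at $\infty$ it holds because a neighbourhood base at $\infty$ is given by complements of finite subsets of $\mZ^d$, and for every $\epsilon>0$ the set $\{z:|f(z)|\geq\epsilon\}=\{a_k:1/k\geq\epsilon\}$ is finite; hence $f^{-1}((-\epsilon,\epsilon))$ is a neighbourhood of $\infty$.

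The second step is to compute the cozero set and its closure. Clearly $\mathrm{coz}(f)=A$. Since every infinite subset of $\mZ^d$ has $\infty$ as its unique cluster point in $\alpha\mZ^d$, we get $\overline{A}=A\cup\{\infty\}$. The key point — the third step — is that $A\cup\{\infty\}$ is not open: its complement is $B$, which is infinite, so $\infty\in\overline{B}\setminus B$, i.e. $B$ is not closed, and therefore $A\cup\{\infty\}$ is not open. Thus $\mathrm{coz}(f)$ does not have open closure, so $\alpha\mZ^d$ is not basically disconnected, proving (1); and then (2) follows as explained.

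There is essentially no serious obstacle here; the construction is elementary. The only point requiring a little care is the verification of continuity of $f$ at the point $\infty$, which is where one must use the precise description of the neighbourhoods of $\infty$ in the one-point compactification together with the fact that $f$ takes each value of modulus $\geq\epsilon$ only finitely often. (As an alternative to invoking Corollary~\ref{corollary_neville_complex}, one could argue (2) directly from the relation-module characterization of coherence recalled in Section~\ref{subsec_1}, by showing that for $f\in c(\mZ^d)$ vanishing exactly on the infinite, co-infinite set $B$ and tending to $0$, the module $\{f\}^\perp$ of sequences in $c(\mZ^d)$ supported in $B$ is not finitely generated — if $g_1,\dots,g_m$ generated it, then with $\epsilon_n:=\max_i|g_i(\mathbf{n})|\to 0$ every element $g$ of $\{f\}^\perp$ would satisfy $|g(\mathbf{n})|\leq C_g\,\epsilon_n$, contradicted by choosing $g$ with $g(\mathbf{n})=\sqrt{\epsilon_n}$; but the topological route above is shorter given the machinery already in place.)
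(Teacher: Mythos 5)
Your proposal is correct and follows essentially the same route as the paper: both exhibit a continuous function on $\alpha\mZ^d$ vanishing on an infinite, co-infinite subset of $\mZ^d$ (the paper uses the even/odd-norm partition, you use an arbitrary one) and tending to $0$ at $\infty$, observe that the closure of its cozero set picks up $\infty$ and hence has infinite complement, so is not open, and then deduce (2) from Corollary~\ref{corollary_neville_complex}. Your parenthetical direct argument for (2) via finite generation of $\{f\}^\perp$ is a nice self-contained alternative, but the main line of reasoning matches the paper's.
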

\begin{proof} (1) 
Firstly, the closed sets $F$ of $\alpha \mZ^d$ are of the form
\begin{enumerate}
\item $F$ is a finite set of integer tuples, or
\item $F=S\cup \{\infty\}$, where $S$ is an arbitrary subset of the
  integer tuples.
\end{enumerate}
From here it follows that the function $\bbf:\alpha \mZ^d\rightarrow \mC$
given by
$$
\bbf(\bbn)=\left\{ \begin{array}{cl} \displaystyle 0 & \textrm{if } \nm\bbn\nm\textrm{ is even or }\bbn=\infty,
\phantom{a_{g_{\displaystyle a_g}} }\\
    \displaystyle \frac{1}{|\bbn|} & \textrm{if } \nm\bbn\nm\textrm{ is odd},
                \end{array}\right. 
$$
is continuous. Indeed, if $K$ is any closed subset of $\mC$ not
containing $0$, then $\bbf^{-1}(K)$ cannot contain $\infty$ and it can
only contain finitely many integer tuples, making it closed in $\alpha
\mZ^d$. On the other hand, if $K$ is a closed subset of $\mC$ containing
$0$, then $\bbf^{-1}(K)$ contains $\infty$, making it closed.  Hence the
inverse images of closed sets under $K$ stay closed. So $\bbf\in
C(\alpha \mZ^d;\mC)$. However, the cozero set of $\bbf$ is
$$
\textrm{coz}(\bbf)=\{\bbn\in \alpha \mZ^d: \bbf(\bbn)\neq 0\}
=\{\bbn\in \mZ^d:\nm \bbn\nm   \textrm{ is odd}\},
$$
whose closure is $\{\bbn\in \mZ^d:\nm\bbn\nm\textrm{ is odd}\}\cup\{\infty\}$, which is
clearly not open in $\alpha \mZ^d$. Hence $\alpha \mZ^d$ is not basically
connected.

\medskip 

\noindent (2) It follows from Corollary~\ref{corollary_neville_complex}
that $c(\mZ^d)$ is not coherent.
\end{proof}

We remark that $c(\mZ^d)$ is not Noetherian since it is not even coherent. 

\section{$\calS'(\mZ^d)$ is Hermite}

A notion related to coherence is that of a Hermite ring; see for example \cite[p.1026]{Tol}. 
The study of Hermite rings arose naturally in the development of algebraic $K$-theory associated with 
Serre's conjecture \cite{Lam}. 

In the language of modules, a ring $R$ is {\em Hermite} if 
every finitely generated stably free $R$-module is free. 

It is known that a commutative unital B\'ezout ring having Bass stable rank $1$ is 
Hermite \cite{Zab}. It was shown in \cite{RupSas} that the Bass stable rank of 
$\calS'(\mZ^d)$ is $1$. As $\calS'(\mZ^d)$ is a B\'ezout ring (Proposition~\ref{prop_Bezout}), we have 
the following:

\begin{theorem}
 $\calS'(\mZ^d)$ is a Hermite ring.
\end{theorem}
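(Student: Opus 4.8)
The plan is to deduce the theorem by combining three ingredients, only the first of which is proved in this paper: by Theorem~\ref{prop_Bezout}, $\calS'(\mZ^d)$ is a commutative unital B\'ezout ring; by \cite{RupSas}, its Bass stable rank equals $1$; and by Zabavsky's theorem \cite{Zab}, a commutative unital B\'ezout ring of Bass stable rank $1$ is Hermite. Chaining these three facts gives the conclusion immediately, so the proof is essentially a one-line deduction.

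If a more self-contained route is wanted, I would argue as follows. Recall (see e.g.\ \cite{Lam}) that a commutative ring $R$ is Hermite exactly when every unimodular row over $R$ of length $\ge 2$ can be completed to an invertible matrix: given a finitely generated stably free $P$ with $P\oplus R^{n}\cong R^{m}$, one peels off the free summands one at a time, each step amounting to completing the unimodular row that describes a split surjection $R^{k}\to R$, and completability makes each resulting kernel free. The Bass stable rank $1$ property of $R=\calS'(\mZ^d)$, supplied by \cite{RupSas}, is precisely what makes $\mathrm{GL}_k(R)$ act transitively on unimodular rows of length $k\ge 2$: from a unimodular $(a_1,\dots,a_k)$ one passes to $(a_1+c_1a_k,\dots,a_{k-1}+c_{k-1}a_k)$, again unimodular but of length $k-1$, and iterating down to a single unit (together with the invertibility of these row operations) places $(a_1,\dots,a_k)$ in the $\mathrm{GL}_k(R)$-orbit of $(1,0,\dots,0)$. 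Feeding this back through the filtration of the stably-free relation shows $P$ is free. (The B\'ezout property is what Zabavsky's formulation uses to keep the ideals arising along the way principal; in the present concrete setting the explicit description of ideals of $\calS'(\mZ^d)$ from the proof of Theorem~\ref{prop_Bezout} could be used in its place.)

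The only genuine obstacle here is bibliographic. The short proof rests entirely on importing the stable-rank computation of \cite{RupSas} and Zabavsky's theorem \cite{Zab}; once those are granted there is no residual mathematics to do. The self-contained alternative is longer but relocates none of the difficulty, since its technical core is once more the verification that $\mathrm{bsr}(\calS'(\mZ^d))=1$, which is exactly what \cite{RupSas} provides.
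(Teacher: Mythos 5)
Your first paragraph is precisely the paper's own proof: the paper also deduces the theorem by combining Theorem~\ref{prop_Bezout} (B\'ezout), the stable rank one computation of \cite{RupSas}, and Zabavsky's theorem \cite{Zab} that a commutative unital B\'ezout ring of Bass stable rank $1$ is Hermite. The additional self-contained sketch is a reasonable outline of why Zabavsky's criterion works, but it adds nothing beyond the cited results, so the proposal matches the paper's approach.
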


\section{$\calS'(\mZ^d)$ is not a projective free ring}

A related stricter notion than that of being Hermite, is the notion of
a projective free ring.

 A commutative unital ring $R$ is {\em projective free} if every
 finitely generated projective $R$-module is free.

 Clearly every projective free ring is Hermite, but the converse may
 not hold. In fact $\calS'(\mZ^d)$ is such an example: we will show
 below that $\calS'(\mZ^d)$ is {\em not} projective free.  We will do
 this using the following characterization of projective free rings;
 see \cite{BRS}.

\begin{proposition}
  Let $R$ be a commutative unital ring. Then $R$ is projective free if
  and only if for every $n\in \mN$ and every $P\in R^{n\times n}$ such
  that $P^2=P$, there exists an integer $r\geq 0$, an
  $S\in R^{n\times n}$, and an $S^{-1}\in R^{n\times n}$ such that
  $SS^{-1}=I_n$ and
 $$
 P=S^{-1}\left[\begin{array}{cc} I_r & 0\\ 0 & 0\end{array}\right]S.
 $$
 \textrm{\em (Here $I_r$ denotes the $r\times r$ identity matrix in $R^{r\times r}$.)}
\end{proposition}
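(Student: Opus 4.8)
The plan is to reduce the statement to the standard dictionary between finitely generated projective $R$-modules and idempotent square matrices over $R$, which I would first record in both directions. Given an idempotent $P\in R^{n\times n}$, the submodule $\mathrm{im}(P)\subseteq R^n$ (the image of the endomorphism $v\mapsto Pv$ of column vectors) is a direct summand, since $R^n=\mathrm{im}(P)\oplus\mathrm{im}(I_n-P)$; hence $\mathrm{im}(P)$ is finitely generated projective. Conversely, if $M$ is finitely generated projective, pick a surjection $\pi\colon R^n\twoheadrightarrow M$; it splits by some $\sigma\colon M\to R^n$, so $P:=\sigma\pi$ is an idempotent endomorphism of $R^n$, i.e. an idempotent matrix in $R^{n\times n}$, with $\mathrm{im}(P)=\sigma(M)\cong M$. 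Thus every finitely generated projective module has the form $\mathrm{im}(P)$ for an idempotent $P$.

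The second ingredient is a conjugacy criterion: for idempotents $P,Q\in R^{n\times n}$, one has $Q=S^{-1}PS$ for some $S\in R^{n\times n}$ with a matrix inverse $S^{-1}$ if and only if $\mathrm{im}(P)\cong\mathrm{im}(Q)$ and $\mathrm{im}(I_n-P)\cong\mathrm{im}(I_n-Q)$ as $R$-modules. The ``only if'' direction is immediate, since conjugation by $S$ carries $\mathrm{im}(P)$ isomorphically onto $\mathrm{im}(Q)$ and $\mathrm{im}(I_n-P)$ onto $\mathrm{im}(I_n-Q)$. For ``if'', given module isomorphisms $\varphi\colon\mathrm{im}(P)\to\mathrm{im}(Q)$ and $\psi\colon\mathrm{im}(I_n-P)\to\mathrm{im}(I_n-Q)$, I would let $S$ be the endomorphism of $R^n=\mathrm{im}(P)\oplus\mathrm{im}(I_n-P)$ that is $\varphi$ on the first summand and $\psi$ on the second; its inverse is built the same way from $\varphi^{-1},\psi^{-1}$ using $R^n=\mathrm{im}(Q)\oplus\mathrm{im}(I_n-Q)$, and checking the identity $SPS^{-1}=Q$ separately on each summand is a one-line computation. (Over a commutative ring a one-sided matrix inverse is automatically two-sided, so ``$SS^{-1}=I_n$'' as in the statement entails no loss.)

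Granting these, the proposition follows quickly. Suppose $R$ is projective free and let $P\in R^{n\times n}$ be idempotent. Both $\mathrm{im}(P)$ and $\mathrm{im}(I_n-P)\cong R^n/\mathrm{im}(P)$ are finitely generated projective, hence free, say $\mathrm{im}(P)\cong R^r$ and $\mathrm{im}(I_n-P)\cong R^s$; since $R^r\oplus R^s\cong R^n$ and commutative rings have invariant basis number (reduce modulo a maximal ideal and count vector-space dimensions), $r+s=n$. Let $E_r\in R^{n\times n}$ be the idempotent whose top-left $r\times r$ block is $I_r$ and whose other entries vanish; then $\mathrm{im}(E_r)\cong R^r$ and $\mathrm{im}(I_n-E_r)\cong R^s$, so the conjugacy criterion produces $T\in R^{n\times n}$ with matrix inverse $T^{-1}$ such that $TE_rT^{-1}=P$. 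Writing $S:=T^{-1}$ gives $P=S^{-1}E_rS$ with $SS^{-1}=I_n$, which is the asserted normal form. Conversely, assume every idempotent matrix has such a normal form, and let $M$ be finitely generated projective; realize $M\cong\mathrm{im}(P)$ for an idempotent $P\in R^{n\times n}$, write $P=S^{-1}E_rS$, and note $\mathrm{im}(P)=S^{-1}\mathrm{im}(E_r)\cong\mathrm{im}(E_r)\cong R^r$, so $M$ is free. Hence $R$ is projective free.

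I do not anticipate a serious obstacle. The two points that deserve care are: (i) that a splitting of $R^n\twoheadrightarrow M$ yields a genuine idempotent \emph{matrix}, so that the abstract module isomorphisms appearing in the conjugacy criterion can be assembled into an honest element of $GL_n(R)$ rather than merely an automorphism of some projective module; and (ii) the bookkeeping that forces the block size $r$ to match the ambient size $n$, which is precisely where invariant basis number for commutative rings is used. Everything else is formal.
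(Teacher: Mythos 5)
Your proof is correct. Note, however, that the paper does not actually prove this proposition: it is quoted as a known characterization of projective free rings with a citation to [BRS] (Ball--Rodman--Spitkovsky), so there is no in-paper argument to compare against. What you have written is the standard proof via the dictionary between finitely generated projective modules and idempotent matrices together with the conjugacy criterion for idempotents, and the two points you flag as needing care (that the splitting $\sigma\pi$ is an honest idempotent matrix, and the use of invariant basis number to force $r+s=n$) are exactly the right ones; the argument is complete as outlined.
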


\begin{theorem}
 $\calS'(\mZ^d)$ is not a projective free ring.
\end{theorem}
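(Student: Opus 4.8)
The plan is to apply the characterisation of projective-free rings stated just above in the extreme case $n=1$, exploiting the fact that $\calS'(\mZ^d)$ carries a great many nontrivial idempotents. A sequence $\bbp\in\calS'(\mZ^d)$ satisfies $\bbp^2=\bbp$ exactly when $\bbp(\bbn)\in\{0,1\}$ for every $\bbn\in\mZ^d$, i.e. $\bbp$ is the indicator function of a subset of $\mZ^d$, and any proper nonempty such subset gives an idempotent different from $\mathbf{0}$ and from $\mathbf{1}$. So the first step is just to record this and to fix one concrete witness: pick $\bbn_0\in\mZ^d$ and set $\bbp:=\mathbf{1}_{\{\bbn_0\}}$. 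Since $\bbp$ is bounded it has (trivially) at most polynomial growth, so $\bbp\in\calS'(\mZ^d)$, and clearly $\bbp^2=\bbp$ while $\bbp\neq\mathbf{0}$ and $\bbp\neq\mathbf{1}$.

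Next I would form the $1\times1$ idempotent matrix $P:=[\bbp]\in\calS'(\mZ^d)^{1\times1}$ and argue by contradiction. If $\calS'(\mZ^d)$ were projective free, the proposition would supply an integer $r\ge0$ — necessarily $r\in\{0,1\}$ since $r\le n=1$ — and elements $S=[u]$, $S^{-1}=[u^{-1}]$ with $u\in\calS'(\mZ^d)$ a unit, such that
$$
P=S^{-1}\left[\begin{array}{cc} I_r & 0\\ 0 & 0\end{array}\right]S .
$$
For $n=1$ the middle block matrix degenerates to $[\mathbf{1}]$ when $r=1$ and to $[\mathbf{0}]$ when $r=0$, and conjugating by the unit $u$ changes nothing; hence the displayed identity forces either $\bbp=\mathbf{1}$ or $\bbp=\mathbf{0}$. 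Both are false, and this contradiction shows that $\calS'(\mZ^d)$ is not projective free.

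There is no real obstacle in this argument; the only points meriting a line of justification are that $r$ is forced into $\{0,1\}$ because the displayed block matrix has size $n\times n=1\times1$, and that $\bbp$ genuinely lies in $\calS'(\mZ^d)$. If one prefers a module-theoretic formulation, an equally short alternative is to note that $P\calS'(\mZ^d)=\bbp\,\calS'(\mZ^d)$ is a finitely generated projective $\calS'(\mZ^d)$-module isomorphic to $\mC$, with $\bba$ acting by multiplication by $\bba(\bbn_0)$, and this is not free since no free module $\calS'(\mZ^d)^k$ has $\mC$-dimension equal to $1$; more generally a projective-free commutative ring is connected, whereas $\calS'(\mZ^d)$ is very far from connected. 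I would nonetheless present the $1\times1$ matrix computation as the main line, since the characterisation recalled above is phrased in those terms.
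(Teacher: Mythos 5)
Your proposal is correct and follows essentially the same route as the paper: both exhibit a nontrivial $1\times1$ idempotent (the paper uses the indicator of the even-norm lattice points, you use the indicator of a singleton, which is immaterial) and derive a contradiction from the characterisation of projective-free rings, noting that $r$ must be $0$ or $1$ and hence $P=\mathbf{0}$ or $P=\mathbf{1}$. No gaps.
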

\begin{proof} Let $R=\calS'(\mZ^d)$ be projective free. Let $P=\mathbf{p}\in R^{1\times 1}$ be given by  
$$
\bbp(\bbn) =\left\{ \begin{array}{ll} 1 & \textrm{if }\nm\bbn\nm \textrm{ is even},\\
                                      0 & \textrm{if }\nm\bbn\nm \textrm{ is odd}.
                    \end{array}\right.
$$
Then $P^2=P$. Since $R$ is projective free, 
it follows that there are an integer $r\geq 0$, an 
 $S\in R^{1\times 1}$, and an $S^{-1}\in R^{1\times 1}$ such that 
 $$
 P=S^{-1}DS,
 $$
 where, since $r$ can only be $0$ or $1$, we have respectively that
 $D=\mathbf{0}$ or $\mathbf{1}$.  But then $P=\mathbf{0}$ or
 $P=\mathbf{1}$, and either case is not possible. This contradiction
 shows that $\calS'(\mZ^d)$ is not projective free.
\end{proof}

\section{$\calS'(\mZ^d)$ is a pre-B\'ezout ring}

Let $R$ be a commutative ring. We say that $d\in R$ {\em divides}
$a\in R$, written $d|a$, if there exists an $\alpha \in R$ such that
$d\alpha=a$.  For $a,b\in R$, an element $d\in R$ is called a {\em
  greatest common divisor} (gcd) {\em of $a,b\in R$} if
  \begin{enumerate}
   \item $d|a$,
   \item $d|b$, and 
   \item whenever $d'\in R$ is such that $d'|a$ and $d'|b$, then also $d'|d$. 
  \end{enumerate}
A commutative ring is a {\em GCD ring} if every $a,b\in R$ possess a gcd. 
It follows from Theorem~\ref{prop_Bezout} that $\calS'(\mZ^d)$ is a GCD ring. 

A commutative ring is {\em pre-B\'ezout} if for all $a,b\in R$ possessing a gcd $d$, 
there exist $x,y\in R$ such that $d=xa+yb$. 

\begin{theorem}
 $\calS'(\mZ^d)$ is a pre-B\'ezout ring.
\end{theorem}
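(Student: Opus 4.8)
The plan is to piggy-back on Theorem~\ref{prop_Bezout}: since $\calS'(\mZ^d)$ is a B\'ezout ring, the explicit principal generator of $\langle\bba,\bbb\rangle$ produced there is not merely \emph{a} gcd of $\bba,\bbb$, but a ``universal'' one, from which every other gcd is obtained by multiplication by a ring element. A B\'ezout identity for that generator therefore transfers to every gcd.

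Concretely, fix $\bba,\bbb\in\calS'(\mZ^d)$; since $\calS'(\mZ^d)$ is a GCD ring, they possess a gcd $\bbd$, and I must produce $\bbx,\bby\in\calS'(\mZ^d)$ with $\bbd=\bbx\bba+\bby\bbb$. First I would recall from the proof of Theorem~\ref{prop_Bezout} that $\langle\bba,\bbb\rangle=\langle\bbe\rangle$ with $\bbe:=|\bba|+|\bbb|$, and that the same proof exhibits the B\'ezout identity $\bbe=\bba\,(\bbu_\bba)^\ast+\bbb\,(\bbu_\bbb)^\ast$ with $(\bbu_\bba)^\ast,(\bbu_\bbb)^\ast\in\calS'(\mZ^d)$. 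Since $\bba,\bbb\in\langle\bbe\rangle$, we also have $\bbe\mid\bba$ and $\bbe\mid\bbb$.

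The next step is to apply condition~(3) in the definition of a gcd to $\bbd$, taking $\bbe$ as the common divisor: because $\bbe\mid\bba$ and $\bbe\mid\bbb$, we obtain $\bbe\mid\bbd$, say $\bbd=\bbe\bbc$ with $\bbc\in\calS'(\mZ^d)$. Substituting the B\'ezout identity for $\bbe$ then gives
\[
\bbd=\bbe\bbc=\big(\bba\,(\bbu_\bba)^\ast+\bbb\,(\bbu_\bbb)^\ast\big)\bbc=\bba\big((\bbu_\bba)^\ast\bbc\big)+\bbb\big((\bbu_\bbb)^\ast\bbc\big),
\]
and since $\calS'(\mZ^d)$ is closed under products, the elements $\bbx:=(\bbu_\bba)^\ast\bbc$ and $\bby:=(\bbu_\bbb)^\ast\bbc$ belong to $\calS'(\mZ^d)$ and satisfy $\bbd=\bbx\bba+\bby\bbb$, which is exactly the pre-B\'ezout condition.

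I do not expect a genuine obstacle. No growth estimate is needed because nothing is inverted: all ingredients come either from the proof of Theorem~\ref{prop_Bezout} or from divisibility relations internal to the ring, and $\calS'(\mZ^d)$ is closed under multiplication. The only point requiring a little care is that $\calS'(\mZ^d)$ is \emph{not} an integral domain, so one cannot argue that two gcds of $\bba,\bbb$ differ by a unit; instead one uses only the one-sided fact that the B\'ezout generator $\bbe$ divides \emph{every} gcd, which is immediate from axiom~(3). It is worth remarking that this argument is purely formal and in fact shows that every commutative B\'ezout ring is pre-B\'ezout.
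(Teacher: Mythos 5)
Your proof is correct, and it takes a genuinely different (and more economical) route than the paper. You reduce everything to Theorem~\ref{prop_Bezout}: the proof of that theorem already exhibits $\bbe:=|\bba|+|\bbb|$ with $\bbe=\bba(\bbu_\bba)^\ast+\bbb(\bbu_\bbb)^\ast$ and $\bbe\mid\bba$, $\bbe\mid\bbb$, so axiom (3) of the gcd definition gives $\bbe\mid\bbd$ and the identity multiplies through by the cofactor $\bbc$. This is the standard formal argument that any commutative unital B\'ezout ring is pre-B\'ezout, and your closing remark to that effect is accurate; you also correctly flag (and correctly avoid) the non-domain pitfall of trying to compare two gcds by a unit. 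The paper instead follows the Mortini--Rupp template for function algebras: it never invokes the B\'ezout property, but works directly with the given gcd $\bbd$, writes $\bba=\balpha\bbd$, $\bbb=\bbeta\bbd$, shows $\bdelta:=\sqrt{|\balpha|}+\sqrt{|\bbeta|}$ divides $\bbd$'s cofactors, and extracts from the polynomial-growth bound on the resulting quotient $\bbq$ a quantitative lower bound on $|\balpha(\bbn)|+|\bbeta(\bbn)|$ off the zero set of $\bbd$, which is what certifies that the explicitly constructed $\bbx,\bby$ lie in $\calS'(\mZ^d)$. What the paper's longer route buys is a method that transfers to algebras which are pre-B\'ezout without being B\'ezout (the setting of \cite{MorRup}), plus the intermediate fact $Z(\bbd)=Z(\bba)\cap Z(\bbb)$; what yours buys is brevity and the general ring-theoretic statement that B\'ezout implies pre-B\'ezout.
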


The proof of this result follows the proof of an analogous result of
Mortini and Rupp in the context of rings of continuous functions
\cite{MorRup}.  For $\bba\in \calS'(\mZ^d)$, let
$\sqrt{|\bba|}\in \calS'(\mZ^d)$ be defined by
$\sqrt{|\bba|}(\bbn):=\sqrt{|\bba(\bbn)|}$, $\bbn\in \mZ^d$.

\begin{proof} Let $\bba,\bbb\in \calS'(\mZ^d)$, and let $\bbd$ be a
  gcd of $\bba,\bbb$.  We first claim that
  $Z(\bbd)=Z(\bba)\bigcap Z(\bbb)$. If $\bbn_0\in Z(\bbd)$, then
  $\bbd(\bbn_0)=0$.  Let $\balpha, \bbeta \in \calS'(\mZ^d)$ be such
  that $\bba=\balpha \bbd$, $\bbb=\bbeta \bbd$. Then
\begin{eqnarray*}
 &&\bba(\bbn_0)= \balpha (\bbn_0)\bbd(\bbn_0)=\balpha(\bbn_0)\cdot 0=0, \textrm{ and}\\
 &&\bbb(\bbn_0)= \bbeta (\bbn_0) \bbd(\bbn_0)=\bbeta (\bbn_0) \cdot 0=0.
\end{eqnarray*}
So $Z(\bbd)\subset Z(\bba)\bigcap Z(\bbb)$. Now let $\bbh:=\sqrt{|\bba|}+\sqrt{|\bbb|}$.  
 Then $\bbh |\bba$. Indeed, if $\bbn \in \mZ^d$ is such that 
 $\bba(\bbn)\neq 0$, then with 
 $$
 \bbA(\bbn):=\frac{\bba(\bbn)}{\sqrt{|\bba(\bbn)|}+\sqrt{|\bbb(\bbn)|}},
 $$
 we have 
 $$
 |\bbA(\bbn)|=\frac{|\bba(\bbn)|}{\sqrt{|\bba(\bbn)|}+\sqrt{|\bbb(\bbn)|}}
\leq \frac{|\bba(\bbn)|}{\sqrt{|\bba(\bbn)|}}=\sqrt{|\bba(\bbn)|}\leq |\bbh(\bbn)|.
$$
If $\bbn \in \mZ^d$ is such that $\bba(\bbn)\neq 0$, then we set
$\bbA(\bbn)=0$. Then it is clear that $\bbA\in \calS'(\mZ^d)$.  Also,
$\bba=\bbA\bbh$, and so $\bbh|\bba$. Similarly, $\bbh|\bbb$.  As
$\bbd$ is a gcd of $\bba,\bbb$, we must have $\bbh|\bbd$. So
$\bbd=\bbh \bdelta$ for some $\bdelta \in \calS'(\mZ^d)$. Hence
$Z(\bbh)\subset Z(\bbd)$. But $Z(\bbh)=Z(\bba)\bigcap Z(\bbb)$.  Thus
$Z(\bba)\bigcap Z(\bbb)\subset Z(\bbd)$. This completes the proof of
our claim that $Z(\bbd)=Z(\bba)\bigcap Z(\bbb)$.
 
 As $\bdelta:=\sqrt{|\balpha|}+\sqrt{|\bbeta|}$ divides $\balpha$ and
 $\bbeta$, it follows that $\bbd\bdelta$ divides $\bbd \balpha=\bba$
 and $\bbd \bbeta=\bbb$. Since $\bbd$ is a gcd of $\bba,\bbb$, there
 must exist a $\bbq\in \calS'(\mZ^d)$ such that
 $\bbd\bdelta\bbq=\bbd$, that is
 $\bbd(\bdelta \bbq-\mathbf{1})=\mathbf{0}$.  Now let
 $Y:=\mZ^d \setminus Z(\bbd)$. For $\bbn\in Y$, $\bbd(\bbn)\neq 0$,
 and so $\bbd(\bdelta \bbq-\mathbf{1})=\mathbf{0}$ gives
 $\bdelta(\bbn)\bbq(\bbn)=1$ for all $\bbn \in Y$. But as
 $\bbq\in \calS'(\mZ^d)$, there exist positive $M,k$ such that
 $|\bbq(\bbn)|\leq M(1+\nm \bbn \nm)^{k}$ for all $\bbn\in \mZ^d$.  In
 particular, for $\bbn \in Y$, we obtain from the above that, for all
 $\bbn\in Y$,
 \begin{equation}
 \label{eq_31_3_2017_1902}
 |\balpha(\bbn)|+|\bbeta(\bbn)|
 \geq 
 \frac{(\sqrt{|\balpha(\bbn)|}+\sqrt{|\beta(\bbn)|})^2}{2}
 = \frac{|\bdelta(\bbn)|^2}{2}
 \geq 
 \frac{1}{2M^2(1+\nm \bbn\nm)^{2k}}.
 \end{equation}
 For any nonzero complex number $z$, let $\textrm{Arg}(z)\in (-\pi,\pi]$ be 
 the unique number such that $z=|z|e^{i\textrm{Arg}(z)}$, and when $z=0$, 
 we set $\textrm{Arg}(0):=0$. 
 Now for $\bbn\in Y$, define 
 $$
 \bbx(\bbn)=\frac{e^{-i \textrm{Arg}(\balpha(\bbn))}}{|\balpha(\bbn)|+|\bbeta(\bbn)|} 
 \textrm{ and } 
 \bby(\bbn)=\frac{e^{-i \textrm{Arg}(\bbeta(\bbn))}}{|\balpha(\bbn)|+|\bbeta(\bbn)|}.
 $$
 If $\bbn\not\in Y$, we set $\bbx(\bbn)=0=\bby(\bbn)$. Then we have
 for all $\bbn \in Y$ that
 $1=\balpha(\bbn)\bbx(\bbn)+\bbeta(\bbn)\bby(\bbn)$, and by
 multiplying throughout by $\bbd(\bbn)$, we obtain for {\em all}
 $\bbn \in \mZ^d$ that
 $\bbd(\bbn)=\bba(\bbn)\bbx(\bbn)+\bbb(\bbn)\bby(\bbn)$.  But from the
 estimate \eqref{eq_31_3_2017_1902}, and the definition of
 $\bbx,\bby$, we see that $\bbx,\bby$ are elements of
 $\calS'(\mZ^d)$. Consequently $\bbd=\bba \bbx+\bbb\bby$ in
 $\calS'(\mZ^d)$, completing the proof.
\end{proof}

\section{$SL_m(R)=E_m(R)$ for $R=\calS(\mZ^d)$} 

Let $R$ be a commutative unital ring and $m\in \mN$. Then we introduce
the following terminology and notation:

\medskip 

\noindent (1) $I_m$ denotes the $m\times m$ identity matrix in $R^{m\times m}$, 
that is the square matrix 

\noindent \phantom{(1)} with all diagonal entries 
equal to $1\in R$ and off-diagonal entries equal to 

\noindent \phantom{(1)} $0\in R$. 

\medskip 

\noindent  (2)  $SL_m(R)$ denotes the group of all $m\times m$ 
matrices $M$ whose entries are

\noindent \phantom{(2)} elements of $R$ and determinant $\det M=1$. 

\medskip 

 \noindent (3)  An {\em elementary matrix} $E_{ij}(\alpha)$ over 
$R$ has the form $E_{ij}=I_n+\alpha \mathbf{e}_{ij}$, 
where 
\begin{enumerate}
 \item $i\neq j$, 
 \item $\alpha \in R$, and  
 \item $\mathbf{e}_{ij}$ is the $m\times m$ matrix whose entry in the
   $i$th row and $j$th column is $1$, and all the other entries of
   $\mathbf{e}_{ij}$ are zeros.
\end{enumerate}

\noindent (4) $E_m(R)$ is the subgroup of $SL_m(R)$ generated by the
elementary matrices.

\medskip 

 \noindent A classical question in commutative algebra is the following: 

\begin{question}
\label{question_GVP}
For all $m\in \mN$,  is $SL_m(R)=E_m(R)$?
\end{question}

\noindent The answer to this question depends on the ring $R$. For
example, if the ring $R=\mC$, then the answer is ``Yes'', and this is
an exercise in linear algebra; see for example \cite[Exercise~18.(c),
page~71]{Art}. On the other hand, if $R$ is the polynomial ring
$\mC[z_1, \cdots, z_d]$ in the indeterminates $z_1, \cdots, z_d$ with
complex coefficients, then if $d=1$, then the answer is ``Yes'' (this
follows from the Euclidean Division Algorithm in $\mC[z]$), but if
$d=2$, then the answer is ``No'', and \cite{Coh} contains the
following example:
$$
\left[ \begin{array}{cc} 1+z_1 z_2 & z_1^2 \\
        -z_2^2 & 1-z_1 z_2 
       \end{array}\right] \in SL_2(\mC[z_1,z_2]) \setminus E_2(\mC[z_1,z_2]).
$$
(For $d\geq 3$, the answer is ``Yes'', and this is the $K_1$-analogue
of Serre's Conjecture, which is the Suslin Stability Theorem
\cite{Sus}.)  The case of $R$ being a ring of real/complex valued
continuous functions was considered in \cite{Vas}.  For the ring
$R=\mathcal{O}(X)$ of holomorphic functions on Stein spaces in
$\mC^d$, Question~\ref{question_GVP} was posed as an explicit open
problem by Gromov in \cite{Gro}, and was solved in \cite{IvaKut}.  It
is known that $SL_m(\ell^\infty(\mN))=E_m(\ell^\infty(\mN))$; see
\cite{MO}.
 
We adapt the proof from \cite{MO} for answering
Question~\ref{question_GVP} for $R=\ell^\infty(\mN)$, to answer this
question for $R=\calS'(\mZ^d)$. We'll prove below
Theorem~\ref{thm_SL_n_for_per_dist}, saying that
$SL_m(\calS'(\mZ^d))=E_m(\calS(\mZ^d))$.  For a matrix
$M=[m_{ij}]\in \mC^{m\times m}$, we set
 $$
 \|M\|_\infty:=\displaystyle\max_{1\leq i, j\leq }|m_{ij}|.
 $$
 Then $\|M_1M_2\|_\infty \leq m \|M_1\|_\infty \|M_2\|_\infty$ for
 $M_1,M_2\in \mC^{m\times m}$.  Let $S_m$ denote the symmetry group
 for a set with $m$ elements.  For $p\in S_m$, let $\textrm{sign}(p)$
 denote the sign of $p$.
 
\begin{lemma}
There exist maps 
\begin{eqnarray*}
&& m\mapsto \nu(m):\mN\rightarrow \mN,\\
&& m\mapsto C(m): \mN\rightarrow (0,\infty),\\
&& m\mapsto k(m): \mN \rightarrow \mN,
\end{eqnarray*}  
such that for every $m\in \mN$ and every $A\in SL_m(\mC)$, there exist 
elementary matrices $E_1(A),\cdots, E_{\nu(m)}(A)$ such that 
$$
A=E_1(A)\cdots E_{\nu(m)}(A),
$$
and $\|E_n(A)\|_\infty \leq C(m)(1+\|A\|_\infty)^{k(m)}$ for all $n=1,\cdots, \nu(m)$. 
\end{lemma}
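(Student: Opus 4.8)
The plan is to reduce everything to a uniform bound on the number of row/column operations needed to bring a matrix in $SL_m(\mC)$ to the identity, together with explicit control on the size of the multipliers. I would proceed by induction on $m$. The base case $m=1$ is trivial: $SL_1(\mC)=\{1\}$, so $\nu(1)=1$ (with $E_1(A)=I_1$, interpreted as an empty product if one prefers), $C(1)=1$, $k(1)=0$. For the inductive step, given $A=[a_{ij}]\in SL_m(\mC)$, the first task is to produce a nonzero entry in the first column whose modulus is bounded below by a quantity depending only on $\|A\|_\infty$. The key observation is that since $\det A=1$, the first column $(a_{11},\dots,a_{m1})^{\top}$ is nonzero, and in fact expanding the determinant along the first column shows $1=\sum_{i}a_{i1}C_{i1}$ where each cofactor $C_{i1}$ satisfies $|C_{i1}|\leq (m-1)!\,\|A\|_\infty^{m-1}$; hence $\max_i|a_{i1}|\geq \frac{1}{m\,(m-1)!\,\|A\|_\infty^{m-1}}$. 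If $a_{11}$ is not the largest, one elementary operation $E_{1j}(1)$ (adding row $j$ to row $1$, say) brings a large entry into the $(1,1)$ slot; more carefully, to keep things inside $SL_m$ and to avoid cancellation, I would add the row with largest first entry to the first row, which changes $\|A\|_\infty$ by at most a factor of $2$ and the $(1,1)$ entry now has modulus $\geq \frac{1}{2m\,(m-1)!\,\|A\|_\infty^{m-1}}=:\rho(m,\|A\|_\infty)$.

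Next I would clear the rest of the first column and first row. For $i=2,\dots,m$, left-multiplying by $E_{i1}(-a_{i1}/a_{11})$ zeros out $a_{i1}$; the multiplier has modulus $\leq \|A\|_\infty/\rho(m,\|A\|_\infty)$, a polynomial in $\|A\|_\infty$ of degree $m$ with a constant depending only on $m$. After these $m-1$ operations the first column is $(a_{11},0,\dots,0)^{\top}$, and the entries of the resulting matrix are bounded by $C'(m)(1+\|A\|_\infty)^{m+1}$ or so (each operation multiplies the $\|\cdot\|_\infty$ by at most $m$ times the multiplier bound plus $1$, and there are boundedly many of them). Symmetrically, $m-1$ right-multiplications by elementary matrices clear the first row. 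Now the matrix has block form $\mathrm{diag}(a_{11},B)$ with $B\in \mC^{(m-1)\times(m-1)}$ and $\det B=a_{11}^{-1}$. To apply induction I need $\det B=1$: I rescale by noting $a_{11}$ is invertible in $\mC$ and using the standard identity $\mathrm{diag}(a,1,\dots,1)=E_{12}(a)\,E_{21}(-a^{-1})\,E_{12}(a)\,E_{12}(-1)\cdots$ — more precisely, the matrix $\mathrm{diag}(a,a^{-1})$ is a product of four elementary matrices in $SL_2$ with entries bounded by a polynomial in $|a|+|a^{-1}|$, and $|a_{11}|$ lies in the range $[\rho(m,\|A\|_\infty),\|A\|_\infty+1]$ so $|a_{11}|+|a_{11}|^{-1}$ is again polynomially bounded in $\|A\|_\infty$. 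Embedding this $2\times 2$ identity in the $(1,2)$ block lets me replace $\mathrm{diag}(a_{11},B)$ by $\mathrm{diag}(1,B')$ with $\det B'=1$, at the cost of a bounded number of elementary factors with polynomially bounded entries (and $B'$ itself has polynomially bounded entries, since it is $B$ with its top-left entry scaled by $a_{11}$, which is $\leq \|A\|_\infty+1$ in modulus times the old entries).

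Finally, the inductive hypothesis applied to $B'\in SL_{m-1}(\mC)$ gives $B'=E_1(B')\cdots E_{\nu(m-1)}(B')$ with $\|E_n(B')\|_\infty\leq C(m-1)(1+\|B'\|_\infty)^{k(m-1)}$; embedding each $E_n(B')$ as an $m\times m$ elementary matrix fixing the first coordinate, and collecting all the factors accumulated above, expresses $A$ as a product of $\nu(m):=\nu(m-1)+(\text{a bounded number, say }2m+6)$ elementary matrices. Since $\|B'\|_\infty$ is bounded by a polynomial in $\|A\|_\infty$ of degree and constant depending only on $m$, substituting into the inductive size bound and combining with the (finitely many, polynomially bounded) factors produced at this level yields $\|E_n(A)\|_\infty\leq C(m)(1+\|A\|_\infty)^{k(m)}$ for suitable $C(m),k(m)$ depending only on $m$ — one simply sets $k(m)$ large enough to dominate the composition of the polynomial degree estimates. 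I expect the only delicate bookkeeping — not a genuine obstacle — to be the lower bound $\rho(m,\|A\|_\infty)$ on the pivot: everything downstream is a polynomially-bounded algebraic manipulation, but the whole scheme collapses if a pivot can be arbitrarily small relative to $\|A\|_\infty$, so the cofactor/determinant argument pinning $\max_i|a_{i1}|$ from below is the load-bearing step. (This lemma is purely about $\mC$; its role, realized in the proof of Theorem~\ref{thm_SL_n_for_per_dist}, is that applying it entrywise over $\mZ^d$ to a matrix in $SL_m(\calS'(\mZ^d))$ produces elementary factors whose entries are again of at most polynomial growth, hence lie in $\calS'(\mZ^d)$.)
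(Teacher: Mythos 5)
Your proposal is correct in outline and rests on the same two pillars as the paper's proof: a quantitative lower bound on the pivot extracted from $\det A=1$, and the Whitehead-type identity writing $\mathrm{diag}(a,a^{-1})$ as a product of four elementary matrices whose entries are controlled by $|a|+|a^{-1}|$. The difference is in how the pivot is selected. The paper takes the globally largest entry $|a_{i_*j_*}|=\|A\|_\infty$, for which the Leibniz expansion of $\det A=\pm1$ gives the clean, $A$-independent lower bound $\|A\|_\infty\geq (m!)^{-1/m}$; the price is that the pivots land in arbitrary positions, so the elimination terminates at an even permutation matrix, which must then itself be factored into elementary matrices with entries bounded by constants depending only on $m$. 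You instead take the largest entry of the first column, bounded below via cofactor expansion by $\frac{1}{m!}\|A\|_\infty^{-(m-1)}$ --- a weaker, $A$-dependent bound, but still polynomially controlled, which is all the lemma needs --- and move it into the $(1,1)$ slot by a row addition, so no permutation matrix appears and the induction is cleaner. Both routes work and yield bounds of the required form.

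One small repair is needed in your pivot-moving step: adding the row with the largest first-column entry to the first row does \emph{not} by itself avoid cancellation --- if $a_{11}=-a_{j1}$ the new $(1,1)$ entry is $0$, and your claimed lower bound $\rho(m,\|A\|_\infty)/2$ fails. The standard fix: since $|a_{j1}|\leq |a_{11}|+|a_{11}+a_{j1}|$, at least one of $|a_{11}|$ and $|a_{11}+a_{j1}|$ is $\geq |a_{j1}|/2$, so either leave the first row alone (equivalently, apply $E_{1j}(0)=I_m$, which keeps $\nu(m)$ independent of $A$) or perform the addition; in either case the $(1,1)$ entry is $\geq \frac{1}{2}\max_i|a_{i1}|$. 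With that case split inserted, the rest of your bookkeeping goes through.
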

\begin{proof} First we note that if $A$ is a square matrix with
  determinant $\pm 1$, then $\|A\|_\infty$ cannot be too
  small. Indeed, as
 $$
 \pm 1=\det A=\sum_{p\in S_m} (\textrm{sign }\!p)\cdot A_{1p(1)}\cdots A_{mp(m)},
 $$
 we have $\|A\|_\infty \geq \frac{1}{\sqrt[m]{m!}}.$
 
 Now let $A\in SL_m(\mC)$. Consider first the case that $|a_{11}|=\|A\|_\infty$. So 
 with $a=a_{11}$, we have 
 $$
  A  =\left[\begin{array}{c|c} 
 a & \ast \\ \hline 
 \ast & \ast \end{array}\right].
 $$
 Now we premultiply the above by 
 $$
 E_a=\left[\begin{array}{c|c}
 \begin{array}{lc} a^{-1} & 0 \\
 0 & a \end{array} & 0 \\ \hline 
 \phantom{\displaystyle\sum}0 \phantom{\displaystyle\sum}& I \end{array}\right].
 $$
 As 
 $$
 \left[
 \begin{array}{lc} a^{-1} & 0 \\
 0 & a \end{array} \right]
 =
 \left[ \begin{array}{lc} 1& 0 \\a^{-1} & 1 \end{array} \right]
 \left[ \begin{array}{lc} 1 & 1-a \\ 0 & 1 \end{array} \right]
 \left[ \begin{array}{rc} 1 & 0 \\ -1 & 1  \end{array} \right]
 \left[ \begin{array}{lc} 1 & 1-a^{-1} \\ 0 & 1 \end{array} \right],
 $$
 we see that $E_a$ is a product of four elementary matrices. We have now 
 $$
 E_a A = \left[\begin{array}{c|c} 
 1 & \ast \\ \hline 
 \ast & \ast \end{array}\right].
 $$
 Using the entry $1$ as a pivot, we can use it to make all other
 entries in the first row and first column equal to $0$. In other
 words, there exist elementary matrices
 $E_1^{(\textrm{r})} ,\cdots, E_{m-1}^{(\textrm{r})} ,
 E_1^{(\textrm{c})} ,\cdots, E_{m-1}^{(\textrm{c})} $ such that
  \begin{equation}
  \label{eq_Fri_31_March_2017_2018}
  E_{m-1}^{(\textrm{r})} \cdots E_1^{(\textrm{r})} E_a  A  
  E_1^{(\textrm{c})} \cdots E_{m-1}^{(\textrm{c})}= \left[\begin{array}{c|c} 
 1 & 0 \\ \hline 
 0 & A_{m-1} \end{array}\right].
 \end{equation}
 So we have used $m-1+4+m-1=2(m+1)$ elementary matrices to obtain this 
 reduction for $A$. 
 Moreover, we have control on the size of the elementary matrices 
 we have used in terms of the size of $A$: indeed,  
 \begin{eqnarray*}
 \| \textrm{each factor of }E_a \|_\infty &\leq &1+\max\{|a^{-1}|, |a|\} 
\\
&\leq &
1+\max\{ \|A\|_\infty, \sqrt[m]{m!}\},
\\ 
\|E_i^{(\textrm{r})} \|_\infty, \;\|E_i^{(\textrm{c})} \|_\infty
&\leq& \|A\|_{\infty} m^4  (1+\max\{ \|A\|_\infty, \sqrt[m]{m!}\})^4,
\end{eqnarray*}
for all $i=1,\cdots, m-1$. All this we've done assuming
$|a_{11}|= \|A\|_\infty$. If this was not the case, then by working in
the same manner as above with the entry $(i_*,j_*)$ such that
$|a_{i_*,j_*}|=\|A\|_\infty$, we obtain
 $$
 E_{m-1}^{(\textrm{r})} \cdots E_1^{(\textrm{r})} E_a A  
  E_1^{(\textrm{c})} \cdots E_{m-1}^{(\textrm{c})}=
  \left[\begin{array}{c|c|c}
         P & \begin{array}{c} 0\\ \vdots \\ 0 \end{array}  & Q \\ \hline 
         \begin{array}{ccc} 0& \cdots & 0  \end{array} & 1& \begin{array}{ccc} 0& \cdots & 0  \end{array} \\ \hline 
         R & \begin{array}{c} 0\\ \vdots \\ 0 \end{array}  & S
        \end{array}\right]=:A', 
        $$
        where 
        $$
        A_{m-1}=\left[\begin{array}{cc} P & Q \\ R & S\end{array}\right].
        $$
        Clearly $\det A_{m-1}=\pm 1$, and so we can continue this
        process by using the largest entry of $A_{m-1}$ and using that
        as a pivot in the matrix $A'$, till we obtain that
        $$
        E_f A E_b =P,
        $$
        where $P$ is a permutation matrix, and $E_f$ is a product of 
        $$
           ( m-1+4)+(m-2+4)+\cdots+(1+4)
           $$ elementary matrices, 
        and $E_b$ is a product of $(m-1)+(m-2)+\cdots+1$ elementary matrices. 
        Also $\det P=(\det E_f )(\det A )(\det E_b)=1\cdot 1\cdot 1=1$. 
        But since each of the $m!/2$ even permutation matrices, which belong to 
        $SL_m(\mC)$ can be expressed as a finite product of elementary matrices 
        with entries that are bounded by constants that depend only on $m$, 
        we see that our claim is true. 
\end{proof}
 
 \begin{theorem}
 \label{thm_SL_n_for_per_dist}
 For all $m\in \mN$, $SL_m(\calS'(\mZ^d))=E_m(\calS(\mZ^d))$. 
 \end{theorem}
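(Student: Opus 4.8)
The plan is to evaluate a given $M\in SL_m(\calS'(\mZ^d))$ at each point of $\mZ^d$, apply the Lemma there, and then reassemble the resulting complex elementary matrices into elementary matrices over $\calS'(\mZ^d)$. The inclusion $E_m(\calS'(\mZ^d))\subseteq SL_m(\calS'(\mZ^d))$ is immediate since every elementary matrix has determinant $\mathbf 1$, so only the reverse inclusion requires argument. Fix $M=[m_{ij}]\in SL_m(\calS'(\mZ^d))$ and, for $\bbn\in\mZ^d$, write $M(\bbn):=[m_{ij}(\bbn)]\in\mC^{m\times m}$. Since the determinant is a polynomial in the matrix entries and the ring operations in $\calS'(\mZ^d)$ are the pointwise ones, $\det M(\bbn)=(\det M)(\bbn)=1$, so $M(\bbn)\in SL_m(\mC)$ for every $\bbn$.

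Next I would fix, once and for all, a deterministic version of the reduction in the proof of the Lemma (for instance, at each stage picking the lexicographically smallest position attaining the maximum modulus, and a fixed factorization of each even permutation matrix into elementary matrices). With such a rule the Lemma assigns to every $A\in SL_m(\mC)$ a definite ordered factorization $A=E_1(A)\cdots E_{\nu(m)}(A)$, where $\nu(m)$ depends only on $m$, each $E_j(A)=I_m+\alpha_j(A)\,\bbe_{i_j(A)\,j_j(A)}$ is elementary, and $\|E_j(A)\|_\infty\le C(m)(1+\|A\|_\infty)^{k(m)}$. The structural point is that the tuple of positions $\big((i_j(A),j_j(A))\big)_{j=1}^{\nu(m)}$ takes only finitely many values as $A$ runs over $SL_m(\mC)$ --- at each of the at most $m-1$ reduction stages there are at most $m^2$ position choices, and the terminal permutation matrix is one of $m!/2$ fixed ones --- so there is a finite set $\calP$ of ``patterns'' and a partition $\mZ^d=\bigsqcup_{p\in\calP}Z_p$, where $Z_p$ consists of those $\bbn$ for which the reduction of $M(\bbn)$ realizes pattern $p$.

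Then, for each $p\in\calP$ and $j\in\{1,\dots,\nu(m)\}$, I would define $\calE_j^{(p)}\in\calS'(\mZ^d)^{m\times m}$ by $\calE_j^{(p)}(\bbn):=E_j(M(\bbn))$ for $\bbn\in Z_p$ and $\calE_j^{(p)}(\bbn):=I_m$ otherwise. On $Z_p$ the off-diagonal position of $\calE_j^{(p)}$ is constant, so $\calE_j^{(p)}=I_m+\balpha_j^{(p)}\,\bbe_{i_j^{(p)}\,j_j^{(p)}}$ for a single scalar sequence $\balpha_j^{(p)}$ supported in $Z_p$; moreover, since each $m_{ij}\in\calS'(\mZ^d)$ there are $\widetilde M,\widetilde k$ with $\|M(\bbn)\|_\infty\le\widetilde M(1+\nm\bbn\nm)^{\widetilde k}$, whence $|\balpha_j^{(p)}(\bbn)|\le C(m)\big(1+\widetilde M(1+\nm\bbn\nm)^{\widetilde k}\big)^{k(m)}$, which has polynomial growth. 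Hence $\balpha_j^{(p)}\in\calS'(\mZ^d)$ and $\calE_j^{(p)}\in E_m(\calS'(\mZ^d))$. (No continuity needs checking: $\calS'(\mZ^d)$ consists of arbitrary polynomial-growth sequences, so only the growth bound is at issue, and that is exactly what the Lemma supplies.) It remains to verify the identity
$$
M=\prod_{p\in\calP}\;\prod_{j=1}^{\nu(m)}\calE_j^{(p)}
$$
in $\calS'(\mZ^d)^{m\times m}$, the outer product taken in any fixed order. Evaluating at $\bbn$ and letting $p_0$ be the pattern of $M(\bbn)$: every factor indexed by $p\neq p_0$ equals $I_m$ at $\bbn$, so the right-hand side collapses at $\bbn$ to $\prod_{j=1}^{\nu(m)}E_j(M(\bbn))=M(\bbn)$. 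Thus $M\in E_m(\calS'(\mZ^d))$, which gives $SL_m(\calS'(\mZ^d))=E_m(\calS'(\mZ^d))$.

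The step I expect to be the main obstacle is exactly the one handled in the third paragraph: the Lemma controls the number and the size of the elementary factors uniformly in $A\in SL_m(\mC)$, but not their positions, so a naive pointwise gluing would not produce genuine elementary matrices over $\calS'(\mZ^d)$. The resolution rests on two facts --- that only finitely many position-patterns occur (a finiteness statement about $SL_m(\mC)$ that is independent of the ring $\calS'(\mZ^d)$), and that factors attached to distinct patterns are equal to $I_m$ off their own stratum, so the order of the outer product over $\calP$ is immaterial. Everything else (polynomial-growth bookkeeping, triviality of $E_m\subseteq SL_m$) is routine.
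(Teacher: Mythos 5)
Your proposal is correct and follows the same strategy as the paper: evaluate $M$ pointwise, invoke the quantitative factorization lemma for $SL_m(\mC)$ (uniform length $\nu(m)$ and the polynomial norm bound, which is what puts the resulting scalar sequences in $\calS'(\mZ^d)$), and then repair the fact that the \emph{types} of the pointwise factors vary with $\bbn$. The only difference is the bookkeeping in that last repair step: the paper replaces each of the $\nu(m)$ slots by a block of $m^2-m$ factors, one per type, all equal to $I_m$ except the one matching the actual type at $\bbn$ (a ``long word'' of length $\nu(m)(m^2-m)$ valid for every $\bbn$ simultaneously), whereas you partition $\mZ^d$ into finitely many strata by the full pattern of types and glue identity off each stratum. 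Both devices rest on the same two facts you isolate --- finiteness of the possible type data and the fact that $I_m$ is elementary of any prescribed type with $\alpha=0$ --- so the proofs are essentially equivalent; the paper's padding yields a shorter product ($\nu(m)(m^2-m)$ factors versus $|\calP|\,\nu(m)$, with $|\calP|$ potentially exponential in $m$) and avoids having to argue that the deterministic reduction produces only finitely many patterns, though that argument of yours is sound.
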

 \begin{proof}  Suppose $\bbA\in SL_m(\calS'(\mZ^d))$. 
 For every $\bbn\in \mZ^d$, 
 \begin{equation}
 \label{6_4_2017_1_07}
 \bbA(\bbn)=E^{[1]}(\bbn)\cdots E^{[\nu(m)]}(\bbn),
 \end{equation}
 where $E^{[1]}(\bbn),\cdots ,E^{[\nu(m)]}(\bbn)$ are elementary matrices over $\mC$, 
 with 
 \begin{equation}
 \label{equation_6_4_17_1:15}
 \|E^{[j]}(\bbn)\|_\infty \leq C(m)(1+\|\bbA(\bbn)\|_\infty)^{k(m)}.
 \end{equation}
 An elementary matrix $I_m+\alpha \bbe_{ij}$ is said to be of ``type''
 $(i,j)$.  We know that there are $m^2-m$ different ``types'' of
 elementary matrices.  (We'd like to see $\bbA$ expressed as a product
 $\bbE^{[1]}\cdots \bbE^{[N]}$ of elements
 $\bbE^{[1]},\cdots, \bbE^{[N]}$ from $E_m(\calS'(\mZ^d))$.  In light
 of \eqref{6_4_2017_1_07}, it seems tempting to define
 $\bbE^{[1]}(\bbn)=E^{[1]}(\bbn)$ etc, but we note that this is not
 guaranteed to give an element $\bbE^{[1]} $ in $E_m(\calS'(\mZ^d))$
 because $\bbE^{[1]}(\bbn_1)=E^{[1]}(\bbn_1)$ may not be of the same
 type as $\bbE^{[1]}(\bbn_2)=E^{[1]}(\bbn_2)$ for distinct
 $\bbn_1, \bbn_2$.  To remedy this, the idea now is as follows.  We
 think of the labels of the types of elementary matrices, say
 $a_1,\cdots,a_{m^2-m}$, as an alphabet, and consider the long word
$$
\underbrace{(a_1\cdots a_{m^2-m}) (a_1\cdots a_{m^2-m}) \cdots(a_1\cdots a_{m^2-m}) }_{\nu(m) \textrm{ groups}}.
$$
And we create a longer, partly redundant, factorization of
$\bbA(\bbn)$ than the one given in \eqref{6_4_2017_1_07} using this
long word as explained below. Then the {\em same} sequence of row
operations on each $\bbA(\bbn)$ will produce $I_m$. So we'll be able
to factorize $\bbA$ into elementary matrices over $\calS'(\mZ^d)$,
``uniformly'' instead of ``termwise''.  We now give the technical
details below.)
 
 We factor 
 $$
 \bbA(\bbn)= 
\underbrace{\Big(E_1^{[1]}(\bbn)\cdots E_{m^2-m}^{[1]}(\bbn)\Big)
 \cdots
\Big(E_1^{[\nu(m)]}(\bbn)\cdots E_{m^2-m}^{[\nu(m)]}(\bbn)\Big) 
  }_{\nu(m) \textrm{ groups}},
$$
where in each of the $\nu(m)$ groupings, all the matrices are
identity, except possibly for one: so if we look at the $i$th
grouping, if $E^{[i]}(\bbn)$ is of type $\alpha_k$, then
\begin{eqnarray*}
E^{[i]}_k(\bbn)&=&E^{[i]}(\bbn),\\
 E^{[i]}_{\ell}(\bbn)&=&I_m \textrm{ for all }\ell\neq k.
 \end{eqnarray*}
 (If it happens that $E^{[i]}(\bbn)$ is itself identity,  then  
 we put all of the $E^{[i]}_\ell(\bbn)= I_m$ for all $\ell=1,\cdots,m^2-m$.) 
Now define $\bbE_j^{[i]}\in E_m(\calS'(\mZ^d))$ by 
 $$
 \bbE_j^{[i]}(\bbn)=E_{j}^{[i]}(\bbn), \quad i=1,\cdots , \nu(m), \quad 
 j=1,\cdots, m^2-m, \quad \bbn\in \mZ^d.
 $$
 (The fact that we have entries in $\calS'(\mZ^d)$ follows from the
 estimate given in \eqref{equation_6_4_17_1:15}.)  Then
 $$
 \bbA=
\underbrace{\Big(\bbE_1^{[1]}\cdots \bbE_{m^2-m}^{[1]}\Big)
 \cdots
\Big(\bbE_1^{[\nu(m)]}\cdots \bbE_{m^2-m}^{[\nu(m)]}\Big) 
  }_{\nu(m) \textrm{ groups}}.
$$
This completes the proof.
 \end{proof}
 
\bigskip 

\noindent {\bf Remark on $SL_m(R)=E_m(R)$ for all $m\in \mN$ when $R=c(\mZ^d)$:} $\;$

\noindent 
Using a result given below in Lemma~\ref{lemma_9}, which follows from
\cite[Lemma~9]{Vas}, we will show Theorem~\ref{thm_c_also_SLn}.

\begin{lemma}[\cite{Vas}]
\label{lemma_9}
Let $R$ be a commutative topological unital ring such that the set of
invertible elements of $R$ is open in $R$. Let $m\in \mN$. If
$C\in SL_m(R)$ is sufficiently close to $I_m$, then $C$ belongs to
$E_m(R)$.
\end{lemma}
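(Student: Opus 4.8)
The plan is to reduce $C$ to $I_m$ by elementary row operations, choosing the neighbourhood of $I_m$ small enough that every pivot used along the way is invertible, and then to kill the resulting diagonal matrix --- which has determinant $1$ --- by Whitehead's lemma. For $m=1$ there is nothing to prove, since $SL_1(R)=\{I_1\}=E_1(R)$, so assume $m\geq 2$.

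Denote by $R^{\times}$ the set of invertible elements of $R$ (open, by hypothesis), and fix an open neighbourhood $U$ of $1$ with $U\subseteq R^{\times}$. Run Gaussian elimination symbolically on an arbitrary $C=[c_{ij}]\in R^{m\times m}$: with $c_{11}$ as pivot, the operations $R_i\mapsto R_i-(c_{i1}c_{11}^{-1})R_1$ --- that is, left multiplication by the elementary matrices $E_{i1}(-c_{i1}c_{11}^{-1})$, which lie in $E_m(R)$ as soon as $c_{11}\in R^{\times}$ --- clear the first column below the pivot; one then passes to the trailing $(m-1)\times(m-1)$ block, whose new corner entry $c_{22}-c_{21}c_{11}^{-1}c_{12}$ is the next pivot, and continues. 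Each entry produced along the way, and in particular each of the pivots $p_1=c_{11},p_2,\dots,p_m$, is obtained from the $c_{ij}$ by finitely many additions, multiplications, and inversions of earlier pivots; hence $(p_1,\dots,p_m)$ is a well-defined continuous function of $C$ on the open set of matrices for which all the inverses needed above exist, and at $C=I_m$ every $p_k$ equals $1\in U$. Therefore there is an open neighbourhood $N$ of $I_m$ in $R^{m\times m}$ --- the preimage of $U\times\cdots\times U$ under $C\mapsto(p_1,\dots,p_m)$ --- such that for every $C\in N$ all pivots lie in $U$, hence are invertible; the elimination then carries through and produces an $E_{\mathrm{left}}\in E_m(R)$ with $E_{\mathrm{left}}\,C=T$, where $T$ is upper triangular with unit diagonal entries $d_1,\dots,d_m$.

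Clearing the strictly upper triangular part of $T$ from the bottom row upward, using the unit entries $d_j$ as pivots (operations $R_i\mapsto R_i-(t_{ij}d_j^{-1})R_j$ for $i<j$), brings $T$ to $D=\mathrm{diag}(d_1,\dots,d_m)$; thus $E'\,E_{\mathrm{left}}\,C=D$ for some $E'\in E_m(R)$. Since elementary row operations preserve the determinant, $d_1\cdots d_m=\det D=\det C=1$. Finally, a diagonal matrix in $SL_m(R)$ with invertible entries lies in $E_m(R)$: this is Whitehead's lemma, which for $m=2$ is the fact $\mathrm{diag}(u,u^{-1})\in E_2(R)$ --- a variant of the four-term factorisation used in the proof of the lemma preceding Theorem~\ref{thm_SL_n_for_per_dist} --- and for general $m$ follows by writing $D$ as a product of $m-1$ matrices, each a copy of $\mathrm{diag}(u,u^{-1})$ placed in two coordinates (and $I_m$ elsewhere). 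Hence $D\in E_m(R)$, and therefore $C=E_{\mathrm{left}}^{-1}(E')^{-1}D\in E_m(R)$, because $E_m(R)$ is a group.

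The one delicate point is the uniformity asserted in the second paragraph: a priori the successive pivots $c_{11}$, $c_{22}-c_{21}c_{11}^{-1}c_{12}$, $\dots$ need not remain invertible for $C$ merely near $I_m$, so one must genuinely verify that a single neighbourhood $N$ of $I_m$ forces all of them into $U$. This is exactly what the two hypotheses buy: openness of $R^{\times}$ makes ``lying in $U$'' an open condition on each pivot, while continuity of the ring operations --- together with continuity of inversion on the open set $R^{\times}$, which is automatic in the rings of interest, e.g. Banach algebras, hence in particular $c(\mZ^d)$ --- propagates the single estimate $c_{11}\in U$ through the finitely many elimination steps. With that in hand the remainder is purely formal.
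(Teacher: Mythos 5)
The paper does not actually prove this lemma---it is quoted from Vaserstein \cite[Lemma~9]{Vas} and used as a black box---so there is no internal proof to compare against; you are supplying an argument the paper omits. Your skeleton is the standard one and is sound: invertibility of the pivots lets Gaussian elimination run entirely inside $E_m(R)$, reducing $C$ to a diagonal matrix $D$ with unit entries and $\det D=1$, and $D$ is then killed by Whitehead's lemma via the same four-term factorisation of $\mathrm{diag}(a^{-1},a)$ that appears in the paper's proof of the lemma preceding Theorem~\ref{thm_SL_n_for_per_dist}. The back-substitution step and the telescoping product $D=\prod_k \mathrm{diag}(1,\dots,1,u_k,u_k^{-1},1,\dots,1)$ are both correct.

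The gap you half-acknowledge in your last paragraph is, however, a real one relative to the hypotheses as stated. The lemma assumes only that $R^\times$ is open; it does not assume that $a\mapsto a^{-1}$ is continuous on $R^\times$, and openness of $R^\times$ does not imply it. Your argument needs that continuity in an essential way from the second pivot onward: to conclude that $p_2=c_{22}-c_{21}c_{11}^{-1}c_{12}$ lies in $U$ you must know that $c_{21}c_{11}^{-1}c_{12}$ is small, and since $c_{11}^{-1}$ varies with $C$, joint continuity of multiplication alone does not give this---you need $c_{11}^{-1}$ to stay near $1$. (Note that for $m=2$ the problem evaporates, since the second pivot is forced to equal $d_1^{-1}$ by $\det C=1$ and is therefore automatically a unit; the difficulty is genuinely with the intermediate pivots when $m\geq 3$.) So what you have proved is the lemma under the additional hypothesis that $R^\times$ is a topological group. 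This is harmless here, because the only ring to which the paper applies the lemma is $c(\mZ^d)$, a commutative Banach algebra, where inversion is continuous by the Neumann series; but you should either add that hypothesis to the statement you are proving, or say explicitly that you are proving the version sufficient for Theorem~\ref{thm_c_also_SLn}, rather than leaving it as a remark that the needed continuity is ``automatic in the rings of interest.''
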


 \begin{theorem}
 \label{thm_c_also_SLn}
 For all $m\in \mN$, $SL_m(c(\mZ^d))=E_m(c(\mZ^d))$. 
 \end{theorem}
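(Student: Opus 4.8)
The plan is to reduce the assertion for $c(\mZ^d)$ to the already-established case of $\ell^\infty(\mZ^d)$ together with the "neighbourhood of the identity" principle recorded in Lemma~\ref{lemma_9}. The ring $c(\mZ^d)$ is a commutative unital Banach algebra under the supremum norm, and its group of invertible elements is open (as in any Banach algebra), so Lemma~\ref{lemma_9} applies: there is an $\epsilon>0$ so that any $C\in SL_m(c(\mZ^d))$ with $\|C-I_m\|<\epsilon$ lies in $E_m(c(\mZ^d))$. The strategy, then, is: given $\bbA\in SL_m(c(\mZ^d))$, multiply it on one side by a product of elementary matrices over $c(\mZ^d)$ so as to bring it within distance $\epsilon$ of $I_m$, and conclude.

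First I would use convergence at infinity. Since each entry of $\bbA$ converges as $\nm\bbn\nm\to\infty$, the matrices $\bbA(\bbn)$ converge to a limit matrix $\bbA_\infty\in\mC^{m\times m}$, and $\det\bbA_\infty=\lim\det\bbA(\bbn)=1$, so $\bbA_\infty\in SL_m(\mC)$. Because $SL_m(\mC)=E_m(\mC)$ (the linear-algebra fact cited in the text), write $\bbA_\infty^{-1}$ as a product $E_1\cdots E_s$ of elementary matrices over $\mC$; viewing each scalar entry as the corresponding constant sequence in $c(\mZ^d)$, this gives an element $\bbE:=E_1\cdots E_s\in E_m(c(\mZ^d))$ with $\bbE\,\bbA_\infty = I_m$. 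Then $\bbE\bbA\in SL_m(c(\mZ^d))$, and as $\nm\bbn\nm\to\infty$ we have $(\bbE\bbA)(\bbn)=\bbE\,\bbA(\bbn)\to\bbE\,\bbA_\infty=I_m$. Hence there is $N\in\mN$ such that $\|(\bbE\bbA)(\bbn)-I_m\|_\infty<\epsilon/(2m^{?})$ — more precisely, close enough in the relevant norm — for all $\nm\bbn\nm>N$.

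Next I would handle the finitely many remaining coordinates by a second, purely "finite-support" correction. On the finite set $F:=\{\bbn:\nm\bbn\nm\le N\}$ the matrix $(\bbE\bbA)(\bbn)$ is an arbitrary element of $SL_m(\mC)$; I want to replace it by something $\epsilon$-close to $I_m$ using elementary matrices over $c(\mZ^d)$ whose entries are supported essentially on $F$ plus possibly constants. Here one can proceed coordinate-by-coordinate: for a single $\bbn_0\in F$ pick a scalar factorization $(\bbE\bbA)(\bbn_0)^{-1}=E_1'\cdots E_t'$ over $\mC$, and for each elementary factor $I_m+\alpha\bbe_{ij}$ build the elementary matrix over $c(\mZ^d)$ whose $(i,j)$ entry is $\alpha$ at $\bbn_0$ and $0$ elsewhere; multiplying $\bbE\bbA$ by these affects only the coordinate $\bbn_0$, turning it into $I_m$, while leaving every other coordinate untouched. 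Doing this successively for each of the finitely many $\bbn_0\in F$ yields $\bbF\in E_m(c(\mZ^d))$ with $(\bbF\bbE\bbA)(\bbn)=I_m$ for $\bbn\in F$ and $(\bbF\bbE\bbA)(\bbn)=(\bbE\bbA)(\bbn)$ for $\bbn\notin F$, so $\|\bbF\bbE\bbA-I_m\|_\infty<\epsilon$. By Lemma~\ref{lemma_9}, $\bbF\bbE\bbA\in E_m(c(\mZ^d))$, and therefore $\bbA=(\bbF\bbE)^{-1}\cdot(\bbF\bbE\bbA)\in E_m(c(\mZ^d))$, as $E_m$ is a group. Since the reverse inclusion $E_m\subseteq SL_m$ is automatic, we get $SL_m(c(\mZ^d))=E_m(c(\mZ^d))$.

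**The main obstacle** I anticipate is the bookkeeping in the finite-support step: one must make sure the elementary matrices used to fix one coordinate $\bbn_0$ genuinely have entries in $c(\mZ^d)$ (they are finitely supported, hence convergent to $0$, so this is fine) and that they do not disturb the coordinates already corrected (they don't, since their off-diagonal entry vanishes outside $\bbn_0$, so each such factor acts as $I_m$ there). A mild alternative to the coordinatewise argument is to invoke the already-proved statement $SL_m(\ell^\infty(\mZ^d))=E_m(\ell^\infty(\mZ^d))$ on the finitely-supported perturbation directly, but since the relevant matrix is $I_m$ off a finite set it is cleaner to do it by hand as above and avoid leaving $c(\mZ^d)$. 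Everything else is routine: continuity of $\det$, openness of the unit group in a Banach algebra, and $SL_m(\mC)=E_m(\mC)$.
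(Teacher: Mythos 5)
Your proof is correct, and it rests on the same two pillars as the paper's: the convergence of $\bbA(\bbn)$ to a limit in $SL_m(\mC)$, and Lemma~\ref{lemma_9} applied to a matrix that has been pushed $\epsilon$-close to the identity. The arrangement differs, though, in a way worth noting. The paper defines an auxiliary $\bbB\in SL_m(c(\mZ^d))$ equal to $\bbA$ on the finite set $\{\nm\bbn\nm\le N\}$ and equal to the limit matrix $L$ elsewhere, asserts $\bbB\in E_m(c(\mZ^d))$ because each of the finitely many distinct values of $\bbB$ factors into elementary matrices over $\mC$, and then applies Lemma~\ref{lemma_9} to $\bbC=\bbA^{-1}\bbB$. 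You instead build the elementary corrections explicitly: a constant-sequence factorization of $\bbA_\infty^{-1}$ to handle the tail, and finitely supported elementary matrices (one off-diagonal entry nonzero at a single $\bbn_0$) to kill each of the finitely many remaining coordinates, then apply Lemma~\ref{lemma_9} to $\bbF\bbE\bbA$. Your version has the merit of making fully explicit the step the paper passes over quickly: turning a sequence of scalar factorizations, whose factors may have different types and lengths at different $\bbn$, into a single product of elementary matrices over $c(\mZ^d)$ is exactly the issue the paper labors over in the $\calS'(\mZ^d)$ case, and your finite-support construction resolves it cleanly here (each correcting factor is the identity off one point, so no type-matching is needed). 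The only cosmetic blemish is the unresolved ``$\epsilon/(2m^{?})$'' tolerance; since your corrections for $\bbn\in F$ produce exactly $I_m$ there and leave the tail untouched, plain $\epsilon$ suffices and no constant-chasing is required.
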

 \begin{proof}
  Let $m\in \mN$ and $\bbA \in SL_m(c(\mZ^d))$. Suppose that $L_{ij}$ is the 
  limit of the matrix entry $\bbA_{ij}\in c(\mZ^d)$, and $L$ be the complex $m\times m$ 
  matrix with the entry $L_{ij}$ in $i$th row and $j$th column. Since $\det:\mC^{m\times m}\rightarrow \mC$ 
  is continuous, we have 
  $$
  \det L= \det \Big(\lim \bbA(\bbn)\Big)= 
  \lim\det \bbA(\bbn)=\lim  1=1.
  $$ 
  Let $\epsilon>0$. Then there exists a $N\in \mN$ such that for all $\bbn\in \mZ^d$ such that $\nm \bbn\nm >N$, 
  we have $\|\bbA(\bbn)-L\|_\infty <\epsilon$. Let $ \bbB\in SL_m(c(\mZ^d))$ be defined by 
  $$
  \bbB(\bbn)=\left\{\begin{array}{ll}
                     \bbA(\bbn) & \textrm{if }\nm \bbn \nm \leq N, \phantom{\displaystyle \sum_g}\\
                     L & \textrm{if } \nm \bbn \nm >N.
                    \end{array}\right.
  $$
  Since $SL_m(\mC)=E_m(\mC)$, it is clear that $L$, as well as the finite number of matrices 
  $\bbA(\bbn)$ with $\nm \bbn \nm \leq N$, can all be written as a product of elementary matrices. Hence 
  it follows that $\bbB \in E_m(c(\mZ^d))$. But 
  $$
  \bbB=\bbA+\bbB-\bbA=\bbA(\bbI+\bbA^{-1} (\bbB-\bbA)),
  $$
  where $\bbI(\bbn):=I_m$ and $\bbA^{-1}(\bbn)=(\bbA(\bbn))^{-1}$ for
  all $\bbn\in \mZ^d$. To complete the proof, it suffices to show that
  $\bbC:=\bbI+\bbA^{-1} (\bbB-\bbA)\in E_m(c(\mZ^d))$.  First note
  that as $\bbA, \bbB\in SL_m(c(\mZ^d))$, we have $ 1=\det \bbA(\bbn)$
  and $1=\det \bbB(\bbn)$ for all $\bbn$. As $\bbA \bbC=\bbB$, it
  follows that also $\det \bbC(\bbn)=1$, and so
  $\bbC\in SL_m(c(\mZ^d))$.  To show $\bbC\in E_m(c(\mZ^d))$, we will
  use Lemma~\ref{lemma_9} above, with $R=c(\mZ^d)$. As
  $R=c(\mZ^d)=C(\mZ^d;\mC)$ is a Banach algebra, the set of invertible
  elements in $R$ is an open subset of $R$.  We have
   $$
  \bbC-\bbI= \bbA^{-1} (\bbA-\bbB), 
  $$
  and since $\bbB$ could have been made as close to $\bbA$ as we liked
  ($\|\bbB-\bbA\|_\infty <\epsilon$, and $\epsilon>0$ was arbitrary),
  it follows that $\bbC$ can be made as close as we like to
  $\bbI$. Hence $\bbC\in E_m(c(\mZ^d))$ by Lemma~\ref{lemma_9}.
 \end{proof}

\section{Solvability of $\bbA\bbx=\bbb$} 

We will show the following:

\begin{theorem}
\label{main_thm_13_feb_2018_11:40}
Let $\bbA\in (\calS'(\mZ^d))^{m\times n}$, $\bbb\in  (\calS'(\mZ^d))^{m\times 1}$.

\noindent 
Then the following two statements are equivalent:
\begin{enumerate}
 \item There exists an $\bbx \in (\calS'(\mZ^d))^{n\times 1}$ such that $\bbA \bbx =\bbb$.
 \item There exists a $\delta>0$ and $k>0$ such that 
 $$
 \forall \bbn\in \mZ^d,\;\forall y\in \mC^m, \;\|(\bbA (\bbn))^\ast y\|_2\geq \delta(1+\nm \bbn\nm)^{-k}|\langle 
 y,\bbb(\bbn)\rangle_2|.
 $$
\end{enumerate}
\end{theorem}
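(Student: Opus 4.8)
The plan is to prove the two implications separately; $(1)\Rightarrow(2)$ is soft and $(2)\Rightarrow(1)$ is where the work is. For $(1)\Rightarrow(2)$: if $\bbx\in(\calS'(\mZ^d))^{n\times 1}$ solves $\bbA\bbx=\bbb$, then, since the ring operations on $\calS'(\mZ^d)$ are termwise, $\bbA(\bbn)\bbx(\bbn)=\bbb(\bbn)$ in $\mC^m$ for every $\bbn\in\mZ^d$; and there are constants $M>0$, $\ell\in\mN$ with $\|\bbx(\bbn)\|_2\le M(1+\nm\bbn\nm)^{\ell}$ for all $\bbn$. Then for all $\bbn$ and all $y\in\mC^m$,
$$
|\langle y,\bbb(\bbn)\rangle_2|=|\langle y,\bbA(\bbn)\bbx(\bbn)\rangle_2|=|\langle(\bbA(\bbn))^\ast y,\bbx(\bbn)\rangle_2|\le\|(\bbA(\bbn))^\ast y\|_2\,\|\bbx(\bbn)\|_2\le M(1+\nm\bbn\nm)^{\ell}\,\|(\bbA(\bbn))^\ast y\|_2,
$$
so $(2)$ holds with $\delta=1/M$ and $k=\ell$.

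For $(2)\Rightarrow(1)$ I would build the solution pointwise and then read off polynomial growth from the estimate itself. Fix $\bbn$ and abbreviate $B=\bbA(\bbn)\in\mC^{m\times n}$, $c=\bbb(\bbn)\in\mC^m$, $\rho=\delta(1+\nm\bbn\nm)^{-k}>0$; the hypothesis says $\|B^\ast y\|_2\ge\rho\,|\langle y,c\rangle_2|$ for all $y\in\mC^m$. First one checks $c\in\operatorname{ran}B$: writing $c=c_1+c_2$ with $c_1$ the orthogonal projection of $c$ onto $\operatorname{ran}B$, a nonzero $c_2$ would satisfy $B^\ast c_2=0$ and $\langle c_2,c\rangle_2=\|c_2\|_2^2>0$, contradicting the estimate with $y=c_2$. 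Hence $B\bbx(\bbn)=c$ is solvable; take $\bbx(\bbn)$ to be the solution of least Euclidean norm, i.e.\ $\bbx(\bbn)=B^{+}c$ with $B^{+}$ the Moore--Penrose pseudoinverse. This minimal-norm solution lies in $(\ker B)^\perp=\operatorname{ran}B^\ast$, so $\bbx(\bbn)=B^\ast w$ for some $w\in\mC^m$, and then $c=B\bbx(\bbn)=BB^\ast w$, whence $\langle w,c\rangle_2=\langle w,BB^\ast w\rangle_2=\|B^\ast w\|_2^2=\|\bbx(\bbn)\|_2^2$. Applying the hypothesis with $y=w$ gives $\|\bbx(\bbn)\|_2=\|B^\ast w\|_2\ge\rho\,\|\bbx(\bbn)\|_2^2$, so
$$
\|\bbx(\bbn)\|_2\le\rho^{-1}=\delta^{-1}(1+\nm\bbn\nm)^{k}.
$$
As this holds for every $\bbn$, each coordinate $\bbx_j$ of the sequence $\bbx$ thus defined satisfies $|\bbx_j(\bbn)|\le\|\bbx(\bbn)\|_2\le\delta^{-1}(1+\nm\bbn\nm)^{k}$, i.e.\ $\bbx\in(\calS'(\mZ^d))^{n\times 1}$; and $\bbA(\bbn)\bbx(\bbn)=\bbb(\bbn)$ for all $\bbn$ means exactly $\bbA\bbx=\bbb$, since the matrix product is formed termwise.

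The only real content is the uniform bound in $(2)\Rightarrow(1)$: solving $\bbA(\bbn)\bbx(\bbn)=\bbb(\bbn)$ separately for each $\bbn$ determines $\bbx(\bbn)$ only modulo $\ker\bbA(\bbn)$, and an arbitrary choice may grow faster than any polynomial, so it need not define an element of $\calS'(\mZ^d)$. The device that fixes this is to select the least-norm solution: it lies in the range of $(\bbA(\bbn))^\ast$, and plugging the corresponding preimage vector back into the hypothesis converts the ``corona'' inequality into the self-improving estimate $\|\bbx(\bbn)\|_2\ge\rho\,\|\bbx(\bbn)\|_2^2$, which is exactly the polynomial bound needed. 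No function theory enters, since the hypothesis is already stated pointwise in $\bbn$ with the polynomial weight built in; this is the reflection, in this ``softest'' possible setting, of the classical matricial corona / one-sided invertibility theorems.
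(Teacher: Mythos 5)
Your proof is correct and follows essentially the same route as the paper: the $(1)\Rightarrow(2)$ direction is the identical Cauchy--Schwarz computation, and for $(2)\Rightarrow(1)$ the paper isolates the same pointwise linear-algebra step as a separate lemma, producing the solution $x=A^*y_0$ with $AA^*y_0=b$ (which is exactly your least-norm/Moore--Penrose solution) and extracting the bound $\|x\|_2\le 1/\delta$ from the same self-improving inequality before applying it with $\delta(1+\nm\bbn\nm)^{-k}$ at each $\bbn$. The only cosmetic difference is your pseudoinverse packaging versus the paper's explicit verification that $b\in\operatorname{ran}(AA^*)$.
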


\noindent Here $\langle \cdot ,\cdot\rangle_2$ denotes the usual Euclidean inner product on $\mC^k$, 
and $\|\cdot\|_2$ is the corresponding induced norm. 

\begin{lemma}
\label{theorem_q_o}
Let 
\begin{itemize}
 \item[(1)] $A\in \mC^{m\times n}$ and $b\in \mC^{m}$,
 \item[(2)] there exist a $\delta>0$ such that 
 $\forall y\in \mC^{m},\;  \|A^{*}y\|_2\geq \delta |\langle y,b\rangle_2 |.$
\end{itemize}
Then there exists an $x\in \mC^{n}$ such that $Ax=b$ with $\|x\|_2\leq 1 / \delta$.
\end{lemma}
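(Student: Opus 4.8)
The plan is to obtain the desired $x$ as the least-norm solution of $Ax=b$ and to read off its length directly from hypothesis (2). First I would check that the system is consistent: applying (2) to any $y\in\ker A^{*}=(\operatorname{ran}A)^{\perp}$ gives $0=\|A^{*}y\|_{2}\geq\delta\,|\langle y,b\rangle_{2}|$, hence $\langle y,b\rangle_{2}=0$, so $b$ is orthogonal to $(\operatorname{ran}A)^{\perp}$, i.e. $b\in(\operatorname{ran}A)^{\perp\perp}=\operatorname{ran}A$, and $Ax=b$ therefore admits a solution.

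Next I would single out the solution $x$ lying in $(\ker A)^{\perp}=\operatorname{ran}A^{*}$ (equivalently, the one of minimal Euclidean norm), so that $x=A^{*}z$ for some $z\in\mC^{m}$; concretely, since $\ker(AA^{*})=\ker A^{*}$ one has $\operatorname{ran}(AA^{*})=(\ker A^{*})^{\perp}=\operatorname{ran}A\ni b$, so $z$ may be chosen with $AA^{*}z=b$ and $x:=A^{*}z$. Then, by the adjoint identity $\langle A^{*}z,u\rangle_{2}=\langle z,Au\rangle_{2}$ taken at $u=x$,
$$
\|x\|_{2}^{2}=\langle A^{*}z,x\rangle_{2}=\langle z,Ax\rangle_{2}=\langle z,b\rangle_{2},
$$
so $\langle z,b\rangle_{2}=\|x\|_{2}^{2}\geq 0$. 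Feeding $y=z$ into (2) now gives
$$
\|x\|_{2}=\|A^{*}z\|_{2}\geq\delta\,|\langle z,b\rangle_{2}|=\delta\,\|x\|_{2}^{2},
$$
whence $\|x\|_{2}\leq 1/\delta$ (divide by $\|x\|_{2}$ when $x\neq0$; trivial when $x=0$). This $x$ satisfies $Ax=b$ and the required norm bound.

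I do not anticipate a genuine obstacle: the statement is elementary finite-dimensional linear algebra. The only points needing a little care are the justification that $b\in\operatorname{ran}A$ together with the representation $x=A^{*}z$, and staying consistent about which slot of $\langle\cdot,\cdot\rangle_{2}$ is conjugate-linear so that the displayed computation is literally valid. The reason for isolating the lemma is that, applied for each fixed $\bbn\in\mZ^{d}$ to $A=\bbA(\bbn)$, $b=\bbb(\bbn)$ with $\delta$ replaced by $\delta(1+\nm\bbn\nm)^{-k}$, it produces a solution $\bbx(\bbn)$ of $\bbA(\bbn)\bbx(\bbn)=\bbb(\bbn)$ obeying $\|\bbx(\bbn)\|_{2}\leq\delta^{-1}(1+\nm\bbn\nm)^{k}$, so that the assembled $\bbx$ lies in $(\calS'(\mZ^{d}))^{n\times1}$ — but that assembly is the business of the proof of Theorem~\ref{main_thm_13_feb_2018_11:40}, not of the lemma itself.
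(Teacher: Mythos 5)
Your proof is correct and follows essentially the same route as the paper's: establish $b\in\operatorname{ran}A$ from (2) applied to $y\in\ker A^{*}$, upgrade this to $b\in\operatorname{ran}(AA^{*})$ via $\ker(AA^{*})=\ker A^{*}$, set $x=A^{*}z$ with $AA^{*}z=b$, and feed $y=z$ back into (2) to get $\|x\|_2^2=\langle z,b\rangle\le\delta^{-1}\|x\|_2$. The only cosmetic difference is that the paper handles the case $b=0$ (hence $x=0$) explicitly before dividing by $\|x\|_2$, which you note parenthetically.
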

\begin{proof} If $y\in \ker A^{*}$, then (2) yields $\langle y,b\rangle_2=0$. 
Thus $b\in (\ker A^{*})^{\perp}=\textrm{ran }A$. 

If $y \in \ker AA^*$, then $
\|A^* y\|_2^{2}=\langle A^* y , A^* y \rangle= \langle AA^* y , y \rangle=\langle 0 , y \rangle=0.$ 
Thus $A^*y=0$, and so $y\in \ker A^*=(\textrm{ran }A)^{\perp}$. Since we had
shown above that $b\in \textrm{ran } A$, we have $ \langle b, y \rangle=0$. But
the choice of $y\in \ker AA^*$ was arbitrary, and so $b\in (\ker AA^*)^\perp= \textrm{ran}(AA^*)^*=\textrm{ran}(AA^*)$. 
Hence there exists a $y_0\in \mC^{m}$ such that $AA^* y_0=b$. Taking $x:=A^*y_0 \in \mC^{n}$, we
have $Ax=b$.

If $b=0$, then we can take $x=0$, and the estimate on $\|x\|_2$ is
obvious. So we assume that $b\neq 0$ and so $A^* y_0 \neq 0$. 
We have
$$
\|A^*y_0\|^2 
=
\langle A^* y_0 , A^* y_0 \rangle =\langle y_0 , AA^*
y_0 \rangle = \langle y_0 , b \rangle=|\langle y_0 , b
\rangle|
\leq \frac{1}{\delta} \|A^* y_0\|_2.
$$
Since $A^*y_0\neq 0$, we obtain $\|x\|_2=\|A^*y_0\|_2 \leq 1 /\delta$.
\end{proof}

\begin{proof}(Of Theorem~\ref{theorem_q_o}:) 

\noindent (1)$\;\Rightarrow\;$(2): As $\bbx\in (\calS'(\mZ^d))^{n\times 1})$, there exist $M,k>0$ 
such that for all $\bbn\in \mZ^d$, $\|\bbx(n)\|_2 \leq M(1+\nm \bbn\nm)^k$. Thus for all 
$y\in \mC^m$ and all $\bbn\in \mZ^d$, 
\begin{eqnarray*}
 |\langle y,\bbb(\bbn)\rangle_2|&=&|\langle y,\bbA(\bbn)\bbx(\bbn)\rangle_2|
 =|\langle (\bbA(\bbn))^*y,\bbx(\bbn)\rangle_2|\\
 &\leq & \| (\bbA(\bbn))^*y\|_2 \|\bbx(\bbn)\|_2 \quad\quad\textrm{(Cauchy-Schwarz)}\\
 &\leq &\| (\bbA(\bbn))^*y\|_2  M(1+\nm \bbn\nm)^k .
\end{eqnarray*}
Setting $\delta:=1/M>0$ and rearranging gives (2). 

\bigskip 

\noindent (2)$\;\Rightarrow\;$(1): Fix $\bbn\in \mZ^d$. Then (2) gives 
$$
\forall y\in \mC^m, \;\|(\bbA (\bbn))^\ast y\|_2\geq \delta(1+\nm \bbn\nm)^{-k}|\langle 
 y,\bbb(\bbn)\rangle_2|.
 $$
 Lemma~\ref{theorem_q_o} immediately gives an $x\in \mC^n$ such that 
  $
 \bbA(\bbn) x=\bbb(\bbn),
 $ 
 with 
 \begin{equation}
  \label{eq_estimate_12_feb_2018_1608}
  \|x\|_2\leq \frac{1}{\delta(1+\nm \bbn\nm)^{-k}}.
 \end{equation}
 Now set $\bbx(\bbn):=x$. By changing $\bbn$ at the outset, we obtain in this manner a map 
 $\bbx:\mZ^d\rightarrow \mC^n$. Setting $M=1/\delta>0$, we have that $\bbx\in (\calS'(\mZ^d))^{n\times 1}$
 since we obtain from \eqref{eq_estimate_12_feb_2018_1608} that 
 $$
 \forall \bbn\in \mZ^d, \;\|\bbx(\bbn)\|_2\leq M(1+\nm \bbn\nm)^k.
 $$
 Moreover, $\bbA\bbx=\bbb$. This completes the proof.
\end{proof}

 \noindent  For $\ell^\infty(\mZ^d)$, one has the following analogous result, and he same proof goes through, mutatis mutandis:

\begin{theorem}
Let $\bbA\in (\ell^\infty(\mZ^d))^{m\times n}$, $\bbb\in  (\ell^\infty(\mZ^d))^{m\times 1}$.

\noindent 
Then the following two statements are equivalent:
\begin{enumerate}
 \item There exists an $\bbx \in (\ell^\infty(\mZ^d))^{n\times 1}$ such that $\bbA \bbx =\bbb$.
 \item There exists a $\delta>0$ and $k>0$ such that 
 $$
 \forall \bbn\in \mZ^d,\;\forall y\in \mC^m, \;\|(\bbA (\bbn))^\ast y\|_2\geq \delta |\langle 
 y,\bbb(\bbn)\rangle_2|.
 $$
\end{enumerate}
\end{theorem}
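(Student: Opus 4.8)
The plan is to mirror exactly the structure used for $\calS'(\mZ^d)$ in Theorem~\ref{main_thm_13_feb_2018_11:40}, since the only genuine input is the pointwise linear-algebra fact of Lemma~\ref{theorem_q_o}, which is purely about matrices over $\mC$ and carries no growth hypothesis at all. First I would prove the easy direction (1)$\Rightarrow$(2). Given $\bbx\in(\ell^\infty(\mZ^d))^{n\times 1}$ with $\bbA\bbx=\bbb$, there is an $M>0$ with $\|\bbx(\bbn)\|_2\le M$ for all $\bbn\in\mZ^d$ (boundedness replacing polynomial growth). Then for every $y\in\mC^m$ and every $\bbn$, writing $\bbb(\bbn)=\bbA(\bbn)\bbx(\bbn)$ and using the adjoint identity $\langle y,\bbA(\bbn)\bbx(\bbn)\rangle_2=\langle(\bbA(\bbn))^\ast y,\bbx(\bbn)\rangle_2$, Cauchy--Schwarz gives $|\langle y,\bbb(\bbn)\rangle_2|\le\|(\bbA(\bbn))^\ast y\|_2\,M$, so $\delta:=1/M$ works and $k$ is irrelevant (one may just take $k=0$ or any positive value to match the statement's phrasing).

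For the reverse direction (2)$\Rightarrow$(1), fix $\bbn\in\mZ^d$. Hypothesis (2) says $\|(\bbA(\bbn))^\ast y\|_2\ge\delta|\langle y,\bbb(\bbn)\rangle_2|$ for all $y\in\mC^m$, which is precisely hypothesis (2) of Lemma~\ref{theorem_q_o} applied to the fixed complex matrix $A=\bbA(\bbn)$ and vector $b=\bbb(\bbn)$. The lemma then produces an $x=x(\bbn)\in\mC^n$ with $\bbA(\bbn)x(\bbn)=\bbb(\bbn)$ and the norm bound $\|x(\bbn)\|_2\le 1/\delta$. Setting $\bbx(\bbn):=x(\bbn)$ and letting $\bbn$ vary yields a map $\bbx:\mZ^d\to\mC^n$; the uniform bound $\|\bbx(\bbn)\|_2\le1/\delta$ (independent of $\bbn$) shows $\bbx\in(\ell^\infty(\mZ^d))^{n\times 1}$, and by construction $\bbA\bbx=\bbb$.

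The proof really is the same as that of Theorem~\ref{main_thm_13_feb_2018_11:40} with polynomial-growth bounds $M(1+\nm\bbn\nm)^k$ replaced throughout by the constant bound $M$; accordingly the factor $(1+\nm\bbn\nm)^{-k}$ disappears from condition (2). There is no real obstacle: Lemma~\ref{theorem_q_o} is already stated for arbitrary complex matrices with a norm estimate that is \emph{uniform} in the data, and that uniformity is exactly what closes the argument for $\ell^\infty$. The one point to be slightly careful about is that the lemma's bound $\|x\|_2\le1/\delta$ does not depend on $A$ or $b$ at all, so the resulting selection $\bbn\mapsto x(\bbn)$ is automatically bounded; no further estimate is needed. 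Hence the theorem follows, and this is what is meant by the remark that ``the same proof goes through, mutatis mutandis.''
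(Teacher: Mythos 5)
Your proof is correct and is exactly the argument the paper intends: the paper gives no separate proof for the $\ell^\infty$ case, merely asserting that the proof of Theorem~\ref{main_thm_13_feb_2018_11:40} goes through mutatis mutandis, and your replacement of the polynomial-growth bound $M(1+\nm\bbn\nm)^k$ by a uniform bound $M$, together with the pointwise application of Lemma~\ref{theorem_q_o} and its $\bbn$-independent estimate $\|x\|_2\le 1/\delta$, is precisely that adaptation. Your observation that the parameter $k$ in condition (2) is vacuous here is also accurate.
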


\noindent We have 
$$
\ell^\infty(\mZ^d)=C_b(\mZ^d;\mC)=C(\beta\mZ^d;\mC)
$$ 
is a Banach algebra. 
Moreover, the natural point evaluation complex homomorphisms 
$$
\ell^\infty(\mZ^d)\owns \bba \mapsto \bba(\bbn)\in \mC, 
$$
constitute a dense set in its maximal ideal space $\beta \mZ^d$. Based on this, 
one may naturally pose the following question:

\begin{question}$\;$

\noindent 
Let $R$ be a commutative, unital, complex, semisimple Banach algebra. 

\noindent 
Suppose that $D$ be a dense set in the maximal ideal space of $R$ with the usual Gelfand topology, 
and let $\widehat{\;\cdot\;}$ denote the Gelfand transform. 

\noindent Let $\bbA\in R^{m\times n}$, $\bbb\in R^{m\times 1}$. 

\noindent {\bf Are the following two statements are equivalent?} 
\begin{enumerate}
 \item There exists an $\bbx \in R^{n\times 1}$ such that $\bbA \bbx =\bbb$.
 \item There exists a $\delta>0$ such that  
 $$
 \forall \varphi\in D,\;\forall y\in \mC^m, \;\|(\widehat{\bbA} (\varphi))^\ast y\|_2\geq \delta |\langle 
 y,\widehat{\bbb}(\varphi)\rangle_2|.
 $$
\end{enumerate}
(Here $\widehat{\bbA}$, $\widehat{\bbb}$ denote the matrices comprising the entry-wise Gelfand transforms of 
$\bbA$, $\bbb$ respectively.)
\end{question}

\noindent It can be seen easily that (1)$\;\Rightarrow\;$(2) is true. However, we now show that (2)$\;\Rightarrow\;$(1) 
may not hold, by considering the case of $c(\mZ^d)= C(\alpha \mZ^d;\mC)$. 

\begin{example}
Let $d=1$, so that $\mZ^d=\mZ$, and 
$$
\bbA(n)=\left[ \begin{array}{cc} 1 & 1\\ \bba(n) & \bbb(n) \end{array}\right]\in \mR^{2\times 2}, 
\quad 
\bbb(n)=\left[ \begin{array}{cc} 1 \\ 0 \end{array}\right]\in \mR^{2\times 1}, \quad n\in \mZ,
$$
where the (real) sequences $\bba,\bbb\in c(\mZ)$ will be suitably constructed later. Taking the dense set 
$D=\mZ$ in the maximal ideal space $\alpha \mZ$ of $c(\mZ)$, the condition (2) above becomes:
$$
\forall n\in \mZ,\;\forall y=\left[ \begin{array}{cc} \alpha \\ \beta \end{array}\right]\in \mC^{2\times 1},
\; |\alpha+\beta \bba(n)|^2+ |\alpha+\beta \bbb(n)|^2 \geq \delta^2 |\alpha|^2.
$$
If $\alpha=0$, then this condition is trivially satisfied. 

\noindent If $\alpha\neq 0$, then dividing throughtout by $|\alpha|^2$, and setting 
$\beta/\alpha=re^{i\theta}$, where $r>0$ and $\theta\in \mR$, we obtain 
$$
\forall n\in \mZ,\;\forall r>0,\;\forall \theta \in \mR,
\; |1+re^{i\theta} \bba(n)|^2+ |1+re^{i\theta} \bbb(n)|^2 \geq \delta^2 ,
$$
that is, 
$$
\forall n\in \mZ,\;\forall r>0,\;\forall \theta \in \mR,
\; (\bba(n)^2+\bbb(n)^2)r^2+2(\bba(n)+\bbb(n))(\cos \theta)r+2-\delta^2  \geq 0.
$$
This will be satisfied for all $r,\theta,n$ if, viewed as a (quadratic) polynomial in $r$ (with $n,\theta$ fixed arbitrarily), 
it has no real roots or has coincident real roots, that is, if 
$$
\Delta:=4\big((\bba(n)+\bbb(n))^2(\cos \theta)^2- (\bba(n)^2+\bbb(n)^2)(2-\delta^2)\big)\leq 0.
$$
First of all, to ensure that we have a quadratic polynomial, we demand that 
\begin{equation}
\label{condition_A1_yeah}
\boxed{\forall n\in \mZ,\;\bba(n)^2+\bbb(n)^2\neq 0.}
\end{equation}
Set  $\delta=1$. Then 
\begin{eqnarray*}
\Delta/4&=&(\bba(n)+\bbb(n))^2(\cos \theta)^2- (\bba(n)^2+\bbb(n)^2)\\
&=& (\bba(n)+\bbb(n))^2- (\bba(n)^2+\bbb(n)^2)+\big((\cos\theta)^2-1\big) (\bba(n)+\bbb(n))^2
\\
&=& 2\bba(n)\bbb(n)+\big(\underbrace{(\cos\theta)^2-1}_{\leq 0}\big) (\bba(n)+\bbb(n))^2\leq 2\bba(n)\bbb(n).
\end{eqnarray*}
So we can ensure that $\Delta\leq 0$ by demanding that 
\begin{equation}
\label{condition_A2_yeah}
\boxed{\forall n\in \mZ,\;\bba(n)\cdot \bbb(n)\leq 0.}
\end{equation}
With $\bba,\bbb$ satisfying \eqref{condition_A1_yeah} and \eqref{condition_A2_yeah}, we have that condition (2) 
holds with $\delta=1$. 

We will now stipulate additional conditions on $\bba,\bbb$ so that $\bbA \bbx=\bbb$ does {\em not} 
possess a solution $\bbx\in (c(\mZ))^{2\times 1}$. To this end, we demand that 
$\det \bbA(n)\neq 0$ for all $n$, that is,
\begin{equation}
\label{condition_A3_yeah}
\boxed{\forall n\in \mZ,\;\bbb(n)-\bba(n)\neq 0.}
\end{equation}
Then the unique solution $\bbx(n)$ to $\bbA(n)\bbx(n)=\bbb(n)$ is given by 
$$
\bbx(n)=\left[ \begin{array}{cc} 1 & 1\\ \bba(n) & \bbb(n) \end{array}\right]^{-1}
\left[ \begin{array}{cc} 1 \\ 0 \end{array}\right]
=
\left[ \begin{array}{cc} \dfrac{\bbb(n)}{\bbb(n)-\bba(n)} \\ \dfrac{\bba(n)}{\bba(n)-\bbb(n)} \end{array}\right].
$$
We want to ensure that $\bbx:=(n\mapsto \bbx(n))$ does not belong to $(c(\mZ))^{2\times 1}$. 
This will be guaranteed if one of its entries is not a convergent sequence. So we demand, say, that the sequence 
\begin{equation}
\label{condition_A4_yeah}
\boxed{
\bigg( \frac{\bba(n)}{\bba(n)-\bbb(n)}\bigg)_{n\in \mN} \textrm{ does not converge}.}
\end{equation}
It remains to construct sequences $\bba, \bbb$ in $c(\mZ)$ possessing the properties 
\eqref{condition_A1_yeah}, \eqref{condition_A2_yeah}, 
\eqref{condition_A3_yeah}, and \eqref{condition_A4_yeah}. 
We may take, for example, 
$$
\bba(n)=\frac{1}{1+n^2}\quad  \textrm{and}\quad
\bbb(n)=-\frac{\{n\sqrt{2}\}}{1+n^2},\quad n\in \mZ,
$$
where $\{x\}:=x-\lfloor x\rfloor$ denotes the fractional part of a real number $x$.  
Then $\bba,\bbb\in c(\mZ)$ because 
$$
\lim_{|n|\rightarrow \infty}\bba(n)=0=\lim_{|n|\rightarrow \infty}\bbb(n).
$$
Condition \eqref{condition_A1_yeah} is satisfied since 
 $
\forall n\in \mZ,\; \bba(n)^2+\bbb(n)^2\geq \bba(n)^2> 0.
$

\noindent 
\eqref{condition_A2_yeah} is fulfilled as
$$
\forall n\in \mZ,\; \bba(n)\cdot \bbb(n)= -\frac{\{n\sqrt{2}\}}{(1+n^2)^2} \leq  0.
$$
Condition \eqref{condition_A3_yeah} holds because 
 $\displaystyle 
\forall n\in \mZ,\; \bba(n)- \bbb(n)= \frac{1+\{n\sqrt{2}\}}{1+n^2} >  0.
$

\noindent 
Finally, we check that \eqref{condition_A4_yeah} is satisfied too. We have 
$$
\frac{\bba(n)}{\bba(n)-\bbb(n)}=\frac{1}{1-\bbb(n)/\bba(n)}= \frac{1}{1+\{n\sqrt{2}\}} .
$$
By Kronecker's Equidistribution Theorem (see for e.g. \cite[p.106-107]{SteSha}),  
the set $\{\{n\sqrt{2}\}: n\in \mN\}$ is dense in $[0,1)$, and so there are subsequences $(\{n_k\sqrt{2}\})_{k\in \mN}$ 
and $(\{\widetilde{n}_k\sqrt{2}\})_{k\in \mN}$ that converge to $0$, respectively $1/2$, and so 
$$
\lim_{k\rightarrow \infty} \frac{\bba(n_k)}{\bba(n_k)-\bbb(n_k)}
=
\frac{1}{1+0}=1\neq \frac{2}{3}=\frac{1}{1+1/2}=\lim_{k\rightarrow \infty} 
\frac{\bba(\widetilde{n}_k)}{\bba(\widetilde{n}_k)-\bbb(\widetilde{n}_k)},
$$
contradicting the 
convergence of $
\bigg( \dfrac{\bba(n)}{\bba(n)-\bbb(n)}\bigg)_{n\in \mN}$. 
 \hfill$\Diamond$
\end{example}

\medskip 

\noindent {\bf Acknowledgements:} The author thanks the anonymous referees for their comments. 
In particular, the first referee for the careful review, and suggestions which improved 
the presentation of the article.

\end{document}